\def\rr{{\mathbb R}}
\def\rn{{{\rr}^n}}
\def\nn{{\mathbb N}}
\def\fz{\infty}
\def\dist{{\mathop\mathrm{\,dist\,}}}
\def\loc{{\mathop\mathrm{\,loc\,}}}
\def\dz{\delta}
\def\ez{\epsilon}
\def\gz{{\gamma}}
\def\boz{{\Omega}}
\def\vz{\varphi}
\def\wz{\widetilde}
\def\diam{{\mathop\mathrm{\,diam\,}}}
\def\r{\right}
\def\lf{\left}
\newtheorem{thm}{Theorem}[section]
\newtheorem{lem}[thm]{Lemma}
\newtheorem{rem}[thm]{Remark}
\newtheorem{cor}[thm]{Corollary}
\numberwithin{equation}{section}
\begin{document}
\arraycolsep=1pt

\title[Orlicz-Besov extension and Ahlfors $n$-regular domains ]{Orlicz-Besov extension and Ahlfors $n$-regular domains}

\author{Tian Liang and Yuan Zhou}

          \address{ Department of Mathematics, Beihang University, Beijing 100191, P.R. China}
                    \email{614895606@qq.com, yuanzhou@buaa.edu.cn}

\thanks{    }

\date{\today }
\maketitle

\begin{center}
\begin{minipage}{12cm}\small{\noindent{\bf Abstract}\quad
Let $n\ge2$ and  $\phi : [0,\fz) \to [0,\infty)$ be a
 Young's function satisfying $\sup_{x>0}
\int_0^1\frac{\phi( t x)}{ \phi(x)}\frac{dt}{t^{n+1} }<\infty.
$
 We show that  Ahlfors $n$-regular domains are Besov-Orlicz ${\dot {\bf B}}^{\phi}$ extension domains,
 which is necessary to guarantee the nontrivially of ${\dot {\bf B}}^{\phi}$.
On the other hand, assume that $\phi$ grows sub-exponentially at $\fz$ additionally. If $\Omega$ is a
Besov-Orlicz ${\dot {\bf B}}^{\phi}$ extension domain, then it must be Ahlfors $n$-regular.
}
\end{minipage}
\end{center}

\section{Introduction\label{s1}}

Let $\phi : [0,\fz) \to [0,\infty)$  be a Young  function, that is,  $\phi$ is a convex,
 $\phi(0)=0$, $\phi(t)>0$ for $t>0$ and $\lim  _{t \to \infty} \phi (t)= +\infty$.
 Given any domain $\Omega\subset\rn$,  the Orlicz-Besov space   ${\dot {\bf B}}^\phi(\Omega)$   consists of
  all measurable functions $u $ in $\Omega$  whose (semi-)norms
\begin{eqnarray*}
\|u\|_{{\dot {\bf B}}^{\phi}(\Omega)} := \inf \left\{\alpha > 0 : \int_{\Omega} \int_{\Omega}  \phi \left(\frac{|u(x)-u(y)|}{\alpha}\right) \frac{dxdy}{|x-y|^{2n}}\leq 1 \right\}
\end{eqnarray*}
is finite.   Modulo constant functions, ${\dot {\bf B}}^{\phi}(\Omega)$ is a Banach space. We refer to \cite{mcp} for the applications of Orlicz-Besov spaces in  qausi-conformal geometry.
 Note that, in the case of   $ \phi(t)=t^p$ with $p\ge1$,   the  ${\dot {\bf B}}^{\phi}(\Omega)$-norms are written as
 \begin{align}\label{eq1.t1}
\|u\|_{{\dot {\bf B}}^\phi(\Omega)} =   \lf(\int_{\Omega} \int_{\Omega}  \frac{|u(x)-u(y)|^p}{|x-y|^{2n}} \,dxdy\r)^{1/p}.
\end{align}
By this,   when  $ \phi(t)=t^p$ with $p>n$,
$  \dot {\bf B} ^\phi(\Omega)$ is exactly the Besov spaces $  {\dot {\bf B}}^{n/p}_{p,p}(\boz)$ (or
 fractional Sobolev spaces $\dot W^{n/p,p}(\Omega) $).
 However, when $ \phi(t)=t^p$ with $p\le n$,
   thanks to \eqref{eq1.t1} and \cite{gkz13},
   the space ${\dot {\bf B}}^{\phi}(\Omega)$  is  trivial, that is, only contains constant functions.

 In general, to guarantee the nontrivially of ${\dot {\bf B}}^\phi(\Omega)$,  we always assume
\begin{equation}\label{delta0}C_\phi :=\sup_{x>0}
\frac{t^n}{\phi(t)}\int_0^t\frac{\phi( s  )}{ s^n}\frac{ds}{s }<\infty.
\end{equation}
Indeed,  \eqref{delta0} does imply that  $  {\dot {\bf B}}^\phi(\Omega)$ contains smooth functions with compact supports, and hence nontrivial; see Lemma \ref{l3.3} below.
If  $\phi(t)=t^p$, observe that  $\phi$  satisfies \eqref{delta0} if and only if  $p>n$,
and ${\dot {\bf B}}^\phi(\Omega)$ is nontrivial if and only if $p>n$.
In this sense, we see that \eqref{delta0} is optimal  to guarantee the nontrivially of ${\dot {\bf B}}^\phi(\Omega)$.
There are some other interesting
 Young functions satisfying \eqref{delta0},  for example, $t^n[\ln(1+t)]^\alpha$ with   $\alpha>1$,
$t^p[\ln(1+t)]^\alpha$ with $p> n$ and $\alpha\ge 1$, $t^pe^{ct^\alpha}$  with $p> n$, $c>0$ and $\alpha> 0$,
and   $e^{c t^\alpha }-\sum_{j=0}^{[n/\alpha] }(ct^\alpha)^j/j!$ where  $c>0$ with $\alpha>0$,
 where $[n/\alpha]$ is the maximum of integers no bigger than $n/\alpha$.

In this paper, we obtain the following results for the Orlicz-Besov extension in
  Ahlfors $n$-regular domains.
Recall that a domain $\Omega$ is  Ahlfors $n$-regular  if there exists a constant $C_A(\Omega)>0$ such that
$$
|B(x,r)\cap\Omega|\ge C_A(\Omega) r^n\quad\forall x\in\Omega, 0<r<2\diam\Omega.$$
A domain $\Omega$ is called ${\dot {\bf B}}^\phi$-extension domain if  any function
$u \in {\dot {\bf B}}^\phi(\Omega)$ can be extended to as a function $\wz u\in {\dot {\bf B}}^\phi(\mathbb R^n)$
continuously  and linearly;
in other words,  there exists a  bounded linear operator $E: {\dot {\bf B}}^\phi(\Omega)\to {\dot {\bf B}}^\phi(\rn)$ with $Eu|_\Omega=u$ for any $u\in {\dot {\bf B}}^\phi(\Omega)$.

\begin{thm}\label{t1.1}
 Let   $\phi$ be a Young function satisfying  \eqref{delta0}.
 \begin{enumerate}
\item[(i)]  If   $\Omega\subset\rn$ is  Ahlfors $n$-regular domain,
then    $\Omega$ is a ${\dot {\bf B}}^\phi$-extension domain.

\item[(ii)] Assume that $\phi$ additionally satisfies  \begin{equation}\label{subexp}
\limsup_{x\to\infty}  { \phi(x)}{e^{-cx}}=0\quad \forall c>0.
\end{equation} If
   $\Omega\subset\rn$ is a ${\dot {\bf B}}^\phi$-extension domain, then  $\Omega$ is  Ahlfors $n$-regular.
   \end{enumerate}
\end{thm}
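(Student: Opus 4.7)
The plan for Part~(i) is to construct an explicit Whitney-average extension. I would begin by fixing a Whitney decomposition $\{Q_j\}$ of $\rn\setminus\overline{\Omega}$ with $\ell(Q_j)\sim\dist(Q_j,\Omega)$, and selecting for each $Q_j$ a ``reflected ball'' $B_j=B(x_j,c\ell(Q_j))$ with $x_j\in\Omega$ and $\dist(x_j,Q_j)\sim\ell(Q_j)$. The Ahlfors hypothesis enters here to force $|B_j\cap\Omega|\gtrsim\ell(Q_j)^n$, so the averages $u_{B_j}:=|B_j\cap\Omega|^{-1}\int_{B_j\cap\Omega}u$ are stable. With a smooth partition of unity $\{\varphi_j\}$ subordinate to a mild dilation of $\{Q_j\}$, I set $Eu=u$ on $\Omega$ and $Eu=\sum_j\varphi_j u_{B_j}$ on $\rn\setminus\overline{\Omega}$. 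To control $\|Eu\|_{{\dot{\bf B}}^\phi(\rn)}$ I split the defining double integral into the three pieces $\Omega\times\Omega$, $\Omega\times\Omega^c$, and $\Omega^c\times\Omega^c$. In the last two, differences $Eu(x)-Eu(y)$ telescope across short Whitney chains of comparable scale, and Jensen's inequality (using convexity of $\phi$) pushes $\phi$ inside the averages, converting averaged $u$-values into averaged $\phi$-oscillations of $u$ itself. Summing over chains should reduce the estimate to a $\phi$-weighted dyadic sum controlled precisely by \eqref{delta0}. The main technical obstacle in (i) is the chain bookkeeping; Ahlfors regularity is exactly what keeps all the relevant balls geometrically comparable so the sum closes.

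For Part~(ii), I argue by contradiction. Suppose $\Omega$ is a ${\dot{\bf B}}^\phi$-extension domain that is not Ahlfors $n$-regular, so there exist $x_k\in\Omega$ and $r_k\in(0,2\diam\Omega)$ with $\epsilon_k:=|B(x_k,r_k)\cap\Omega|/r_k^n\to 0$. Since the double integral in $\|\cdot\|_{{\dot{\bf B}}^\phi(\rn)}$ is invariant under translation and dilation, I first reduce (via $\Omega\mapsto\tfrac{1}{r_k}(\Omega-x_k)$) to $x_k=0$, $r_k=1$, and $|B(0,1)\cap\Omega|=\epsilon_k\to 0$; the extension-operator norm is preserved. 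Next, take a fixed Lipschitz cutoff $u_k$ supported in $B(0,1/2)$ with $u_k(0)=1$ and $|\nabla u_k|\lesssim 1$, and (using $r_k<2\diam\Omega$) choose $y_k\in\Omega$ with $|y_k|\sim 1$ and $u_k(y_k)=0$. A direct estimate of the ${\dot{\bf B}}^\phi(\Omega)$ double integral of $u_k$, exploiting that the support meets $\Omega$ in a set of measure at most $\epsilon_k$ and invoking \eqref{delta0} to sum $\phi$-contributions over dyadic scales, should give
\[
\|u_k\|_{{\dot{\bf B}}^\phi(\Omega)}\lesssim\frac{1}{\phi^{-1}(C/\epsilon_k)}\longrightarrow 0.
\]

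To close the argument, I would invoke a pointwise estimate: under \eqref{delta0} together with \eqref{subexp}, every $v\in{\dot{\bf B}}^\phi(\rn)$ admits a continuous representative satisfying $|v(x)-v(y)|\le C_0\|v\|_{{\dot{\bf B}}^\phi(\rn)}$ for all $x,y\in\rn$. Applied to $v=Eu_k$ at the points $0$ and $y_k$, and combined with $\|Eu_k\|_{{\dot{\bf B}}^\phi(\rn)}\le\|E\|\|u_k\|_{{\dot{\bf B}}^\phi(\Omega)}$, this forces $1\le C_0\|E\|/\phi^{-1}(C/\epsilon_k)\to 0$, the desired contradiction. The hard part in (ii) will be this continuous-representative bound: scale invariance of $\|\cdot\|_{{\dot{\bf B}}^\phi(\rn)}$ collapses any H\"older-type modulus into a constant-modulus oscillation inequality, and the sub-exponential hypothesis \eqref{subexp} is precisely what rules out the rapid $\phi$-growth that would otherwise let ${\dot{\bf B}}^\phi$ functions hide pointwise jumps. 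The difficulty in (i) is of a different kind --- chain combinatorics --- but once \eqref{delta0} is used to sum the telescoping $\phi$-weighted terms across scales, the estimate should close cleanly.
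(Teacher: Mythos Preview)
Your outline for Part~(i) is close in spirit to the paper's, but it glosses over the one technical point the paper singles out as essential. You take reflected balls $B_j=B(x_j,c\ell(Q_j))\cap\Omega$ with $x_j\in\Omega$ a nearest point; this family need not have bounded overlap. For $\phi(t)=t^p$ one can absorb the resulting overcounting via maximal functions, but for general Young $\phi$ (possibly with near-exponential growth) no such tool is available, and the Jensen/convexity steps you describe do not close the sum. The paper avoids this by using Shvartsman's reflected \emph{quasi-cubes} $Q^\ast$, carved out so that $\sum_{Q}\chi_{Q^\ast}\le\gamma_2$; this bounded-overlap property is what lets the convexity argument terminate in a single copy of the $\Omega\times\Omega$ integral. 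The paper's Remark~4.1 explicitly says the naive reflected-ball operator is not known to be bounded for general $\phi$, so as written your plan for~(i) has a gap at exactly the ``chain bookkeeping'' step you flag as the main obstacle.

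For Part~(ii) there is a genuine error. The pointwise inequality you invoke,
\[
|v(x)-v(y)|\le C_0\,\|v\|_{{\dot{\bf B}}^\phi(\rn)}\quad\text{for all }x,y\in\rn,
\]
is false. Take $\phi(t)=t^p$ with $p>n$: this $\phi$ satisfies both \eqref{delta0} and \eqref{subexp}, yet ${\dot{\bf B}}^\phi(\rn)=\dot W^{n/p,p}(\rn)$ is the \emph{critical} case $sp=n$, which embeds into $BMO$ but not into $L^\infty$ (functions behaving like $(\log\tfrac{1}{|x|})^\alpha$ near the origin belong to the space for small $\alpha>0$ but are unbounded). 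So your contradiction never fires. The paper's route is to use only the $BMO$ embedding plus John--Nirenberg to get the exponential-integrability inequality~\eqref{im}, compute $\|u_{x,r,t}\|_{{\dot{\bf B}}^\phi(\Omega)}$ precisely for annular cut-offs (this is where \eqref{delta0} enters), and then run a dyadic exhaustion: choose radii $b_jr$ with $|B(x,b_jr)\cap\Omega|=2^{-j}|B(x,r)\cap\Omega|$, apply \eqref{im} to each $u_j=u_{x,b_{j+1}r,b_jr}$, and use \eqref{subexp} to turn $\phi^{-1}$ into a quantitative decay that makes $\sum_j(b_j-b_{j+1})$ converge. The sub-exponential hypothesis is used not to upgrade $BMO$ to $L^\infty$ (which is impossible) but to control the growth of $\phi$ against the logarithm coming from John--Nirenberg.
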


Note that the condition  \eqref{subexp} in Theorem 1.1 (ii)  allows a large class of Young functions,
 including $t^n[\ln(1+t)]^\alpha$ with   $\alpha>1$,
$t^p[\ln(1+t)]^\alpha$ with $p> n$ and $\alpha\ge 1$, $t^pe^{ct^\alpha}$  with $p> n$ and $\alpha\in(0,1)$,
and   $e^{c t^\alpha }-\sum_{j=0}^{[n/\alpha] }(ct^\alpha)^j/j!$ where  $c>0$ with $\alpha\in(0,1)$.
 But \eqref{subexp} rules out  $t^pe^{ct^\alpha}$  with $p> n$ and $\alpha\ge 1$,
and   $e^{c t^\alpha }-\sum_{j=0}^{[n/\alpha] }(ct^\alpha)^j/j!$ where  $c>0$ with $\alpha\ge1$,

Theorem 1.1  extends the known results for fractional Sobolev  spaces $\dot {\bf W}^{n/p,p}(\Omega)$ or
Besov space $\dot {\bf B}^{n/p}_{p,p}(\Omega)$.
Recall that the extension problem for function spaces (including Sobolev, fracntional Sobolev, Hajlasz-Sobolev,
 Besov and Triebel-Lizorkin spaces)
have been widely studied in  the literature,
see \cite{j80,j81,s70,jw78,jw84,ds93,k98,r99,t02,t08,hkt08,s06,s10}
and the references therein.
Given function spaces $X(U)$ defined in any domain   $U\subset \rn$  in the same manner,
define  $X$-extension domains  similarly to  $\dot{\bf B}^\phi$-extension domains.
It turns out that the extendability  of functions in $X(\Omega)$
 not only relies on the geometry of the domain
 but also on the analytic properties of $X$.
In particular, it was essentially known that  Ahlfors $n$-regular domains are
fractional Sobolev $\dot {\bf W}^{s,p} $-extension domains  for any $s\in(0,1)$ and  $p\ge1$;
see Jonsson-Wallin \cite{jw78} (also Shvartsman \cite{s06}).
Here $\dot {\bf W}^{s,p} (\Omega)$ is the set of all functions with  \begin{align}
\|u\|_{{\dot {\bf W}}^{s,p}(\Omega)}: =   \lf(\int_{\Omega} \int_{\Omega}  \frac{|u(x)-u(y)|^p}{|x-y|^{ n+sp}} \,dxdy\r)^{1/p}<\fz.
\end{align}
Moreover, by  Shvartsman \cite{s07} and Haj\l asz et al \cite{hkt08,hkt08b}, Ahlfors $n$-regular domains also are
 Hajlasz-Sobolev $\dot {\bf M}^{1,p} $-extension domain with $p\ge 1$.
 Recall that for a given function $u $ in $\Omega$, we say $g$ is a  Haj\l asz gradient of $u$ (for short $g\in \mathbb D(u,\Omega)$) if
 $$|u(x)-u(y)|\le |x-y|[g(x)+g(y)]\quad\mbox{for almost all $(x,y)\in \Omega\times\Omega$}.$$
 The Haj\l asz Sobolev space
 $\dot {\bf M}^{1,p}(\Omega) $ is the set of all functions $u$ in $\Omega$  with
 $$\|u\|_{\dot {\bf M}^{1,p}(\Omega)}:=\inf_{g\in \mathbb D(u,\Omega)}\|g\|_{L^p(\Omega)}<\fz$$
Conversely, Haj\l asz  \cite{hkt08,hkt08b} essentially proved that Hajlasz-Sobolev $\dot {\bf M}^{1,p} $-extension domains must be Ahlfors $n$-regular; and by \cite{z14},
similar results hold true for  fractional Sobolev $\dot {\bf W}^{s,p} $-extension   for any $s\in(0,1)$ and  $p\ge1$. Note that $\dot {\bf W}^{n/p,p}(\Omega) =  \dot {\bf B}^{n/p}_{p,p}(\Omega)=\bf{\dot B}^\phi(\Omega) $ for any $p>n$ and $\phi(t)=t^p$.

 To prove  Theorem 1.1 (i), it suffices to  define a suitable linear extension operator and prove its boundedness.
 Following Jones \cite{j81}, to define the extension operator we have to find suitable reflecting cubes
of  Whitney cubes for $\rn\setminus\overline \Omega$.
If we use the reflecting cubes the same as in \cite{k98,hkt08,wxz,z14} which may have unbounded overlaps,
we cannot prove the boundedness of the extension operator  in general
 since the Young function may grows exponentially at $\fz$. See Remark 4.1 for details.
Instead, we use the reflecting cubes of Shvartsman \cite{s06,s07}, which have bounded overlaps (see Lemma 2.2), to define extension operator.
The bounded overlaps of reflecting cubes allow us to use the convexity of $\phi$, and also avoid using maximal functions. With some careful analysis, we finally obtain   the boundedness of extension operator.

 Theorem 1.1 (ii)  is proved  in section 5 by borrowing some ideas from \cite{hkt08,z14}.
 Precisely, we first prove
  $\Omega $  supports the following imbedding:
 there exists  positive constants $C_I(\Omega)$ and $C (n) $ such that
\begin{eqnarray}\label{im}
\inf \limits _{c \in R} \int_{B \cap \Omega} \exp\left( \frac{|u- c|}{\alpha} \right)\,dx \leq C (n) |B|\quad \mbox{for any ball   $B \subset \rn$.}
\end{eqnarray}
whenever  $u \in {\dot {\bf B}}^\phi (\Omega)$ and $\alpha  > C_{I}(\Omega)\|u\|_{{\dot {\bf B}}^\phi(  \Omega)}>0$. Then we calculate the precise $\|u\|_{{\dot {\bf B}}^\phi(  \Omega)}$-norm
of some cut-off functions. Using this  and the sub-exponential growth of $\phi$
following the idea   from \cite{hkt08} (see also \cite{hkt08b,z14}), we are able to prove $\Omega$ is Ahlfors $n$-regular.

As a byproduct, we have the following result.
\begin{cor} \label{c1.2}
Suppose that   $\phi$ is a Young function satisfying   \eqref{delta0} and   \eqref{subexp}.
Let $\Omega\subset\rn$ be any domain. The  following are equivalent:
\begin{enumerate}
 \item[(i)] $\Omega$ is Ahlfors $n$-regular;

  \item[(ii)]  $\Omega$ is a ${\dot {\bf B}}^\phi$-extension domain;

 \item[(iii)]  $\Omega$ supports the  imbedding \eqref{im}.
 \end{enumerate}
\end{cor}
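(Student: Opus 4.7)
The plan is to prove the three-way equivalence by closing the cycle $(i) \Rightarrow (ii) \Rightarrow (iii) \Rightarrow (i)$. The first arrow is precisely Theorem~\ref{t1.1}(i), so the only new work is $(ii) \Rightarrow (iii)$ and $(iii) \Rightarrow (i)$; these correspond to the two successive steps already sketched for the proof of Theorem~\ref{t1.1}(ii), with the exponential imbedding \eqref{im} isolated as an intermediate equivalent condition between the extension property and Ahlfors $n$-regularity.

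For $(ii) \Rightarrow (iii)$, the idea is to transfer \eqref{im} from $\rn$ to $\Omega$ via the extension operator. Given $u \in {\dot {\bf B}}^{\phi}(\Omega)$, set $\wz u = Eu \in {\dot {\bf B}}^{\phi}(\rn)$, so that $\|\wz u\|_{{\dot {\bf B}}^{\phi}(\rn)} \le \|E\|\,\|u\|_{{\dot {\bf B}}^{\phi}(\Omega)}$. It then suffices to establish the corresponding global exponential imbedding
\[
\inf_{c \in \mathbb R} \int_B \exp\lf(\frac{|\wz u - c|}{\alpha}\r) dx \le C(n) |B|
\]
on every ball $B \subset \rn$ for $\alpha > C_0 \|\wz u\|_{{\dot {\bf B}}^{\phi}(\rn)}$. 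This is a Trudinger--Moser/John--Nirenberg type estimate: the convexity of $\phi$ together with the lower bound $\phi(t) \gs t^n$ for large $t$ implied by \eqref{delta0} yields exponential decay of the level sets of $\wz u - c^*$ for a suitable $c^*$. Restricting the resulting integral from $B$ down to $B \cap \Omega$ and using $\wz u|_\Omega = u$ then produces \eqref{im} with $C_I(\Omega) = C_0 \|E\|$.

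For $(iii) \Rightarrow (i)$, I argue by contradiction. Fix $x_0 \in \Omega$ and $0 < r < 2 \diam \Omega$, and set $\nu := |B(x_0, r/2) \cap \Omega|/r^n$; suppose $\nu$ can be taken arbitrarily small. Build a Lipschitz cutoff $u$ on $\Omega$ that equals a parameter $T$ on $A := B(x_0, r/2) \cap \Omega$, vanishes on $\Omega \setminus B(x_0, r)$, and interpolates linearly in between. The dominant contribution to the double integral defining $\|u\|_{{\dot {\bf B}}^{\phi}(\Omega)}$ comes from pairs $(x,y)$ with $x \in A$ and $y \in \Omega \setminus B(x_0, r)$ (where $|u(x)-u(y)| = T$ and $|x-y| \gs r$), and yields the upper bound $\|u\|_{{\dot {\bf B}}^{\phi}(\Omega)} \ls T/\phi^{-1}(1/\nu)$. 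Inserting $u$ into \eqref{im} on $B = B(x_0, r)$ with $\alpha \sim C_I(\Omega)\,T/\phi^{-1}(1/\nu)$ and explicitly minimizing $|A| e^{(T-c)/\alpha} + (|B \cap \Omega| - |A|) e^{c/\alpha}$ over $c \in (0, T)$ gives, after some bookkeeping, the inequality $1/\nu \ls \phi(C \log(1/\nu))$. Since \eqref{subexp} forces $\phi(C \log M)/M \to 0$ as $M \to \infty$, this is a contradiction for $\nu$ sufficiently small, hence $|B(x_0, r/2) \cap \Omega| \gs r^n$, which is Ahlfors $n$-regularity.

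The main obstacle is the careful norm estimate of the cutoff $u$: because $\phi$ may grow faster than any polynomial, the elementary $L^p$-scaling arguments of \cite{hkt08, z14} do not apply, and one must split the double integral into pieces (pairs across $B(x_0, r)$, pairs in the transition annulus, and pairs within $A$) and bound each using \eqref{delta0} for the short-range contributions. Fine-tuning the balance between this norm estimate and \eqref{im} so that precisely the sub-exponential growth \eqref{subexp} produces the contradiction is where the analytic heart of the argument lies.
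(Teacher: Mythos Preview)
Your overall cycle $(i)\Rightarrow(ii)\Rightarrow(iii)\Rightarrow(i)$ is exactly how the paper organizes the corollary, and the step $(ii)\Rightarrow(iii)$ is essentially right, though the paper's route is cleaner than what you sketch: rather than arguing via ``$\phi(t)\gtrsim t^n$ implies exponential decay of level sets'' (which by itself would only give polynomial decay), the paper observes directly from Lemma~\ref{l2.5} and Jensen that $\dot{\bf B}^\phi(\rn)\hookrightarrow BMO(\rn)$, and then the classical John--Nirenberg inequality gives \eqref{im} immediately. You should replace your heuristic there by this two-line argument.

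The step $(iii)\Rightarrow(i)$, however, has a genuine gap. With your single cutoff $u$ equal to $T$ on $A=B(x_0,r/2)\cap\Omega$, vanishing on $\Omega\setminus B(x_0,r)$, and tested on the ball $B=B(x_0,r)$, the set $\{u=0\}\cap B\cap\Omega$ has measure zero (it is contained in the sphere $\partial B$). Consequently the minimization you write,
\[
|A|\,e^{(T-c)/\alpha}+(|B\cap\Omega|-|A|)\,e^{c/\alpha},
\]
is not a lower bound for $\int_{B\cap\Omega}e^{|u-c|/\alpha}$: on the annulus $u$ takes all values in $[0,T]$, and for (say) $c=T$ one gets only the trivial estimate $|B\cap\Omega|\le C(n)|B|$. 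Testing on a larger ball $B(x_0,2r)$ does not help either, since the measure of the zero set $(B(x_0,2r)\setminus B(x_0,r))\cap\Omega$ is precisely the kind of quantity you have no control over before Ahlfors regularity is established.

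The paper circumvents this by the Haj\l asz--Koskela--Tuominen dyadic scheme: one defines radii $1=b_0>b_1>\cdots$ by the \emph{measure} condition $|B(x,b_jr)\cap\Omega|=2^{-j}|B(x,r)\cap\Omega|$, builds cutoffs $u_j=u_{x,b_{j+1}r,b_jr}$, and tests \eqref{im} on the larger ball $B(x,b_{j-1}r)$. This guarantees that the $1$-set and the $0$-set of $u_j$ inside the test ball both have measure $\sim 2^{-j}|B(x,r)\cap\Omega|$, so for every $c$ one of them contributes $\gtrsim 2^{-j}|B(x,r)\cap\Omega|\,e^{1/(2\alpha)}$. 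Combining with Lemma~\ref{l2.3} yields a bound on $b_j-b_{j+1}$, and \emph{summing over $j$} (using \eqref{subexp} in the form $\phi(C\ln M)\le C_\delta M^{1/2}$) bounds $b_1$ from above, from which Ahlfors regularity follows (with an extra recentering trick when $b_1<1/10$). The summation over all scales is essential and is what your single-scale contradiction argument is missing.
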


\begin{rem}\rm
 We conjecture that  Theorem \ref{t1.1} (ii) holds without the additional assumption \eqref{subexp}.
  The difficult to remove \eqref{subexp} is to
   find a suitable imbedding properties of  $ {\dot {\bf B}}^\phi(  \rn) $ better than \eqref{im} when $\phi$ does not satisfies \eqref{subexp}.

 Note that   \eqref{im} is  always true when $\Omega$ is a $  \dot {\bf B} ^{\phi }$-extension domain,
 but it is not enough to prove that $\Omega$ is Ahlfors $n$-regular in general.
 If  $\phi(t)=e^{ t^\alpha }-\sum_{j=0}^{[n/\alpha] } t ^{\gz j}/j!$ for $t\ge0$,
by Lemma 2.5,  $\Omega$ supports the imbedding
 $$\fint_{B} \exp\left(\frac{ |u(x)- u_{B}|}{\alpha}\right)^\gz\,dx\le C(n) $$
whenever $\alpha>C(\gz,n)\|u\|_{ \dot {\bf B} ^{\phi_\gz}(\rn) }$.
However, when $  \dot {\bf B} ^{\phi_\gz}$-extension domain, such a imbedding is also not enough to prove  $\Omega$ is Ahlfors $n$-regular.
 \end{rem}

Notation used in the following is standard. The constant $C(n,\alpha, \phi)$ would vary from line to line and is independent of parameters depending only on $n, \alpha, \phi$. Constants with subscripts would not change in different occurrences , like $C_\phi$.
Given a domain, set $B_{\Omega}(x,r)=B(x,r)\cap \Omega$ for convenience.
We denote by $u_X$ the average of $u$ on $X$, namely, $u_X = \fint _{X} u \equiv \frac{1}{|X|} \int _{ X} u \, dx$. For a domain $\Omega$ and $x\in \rn$, we use $d(x, \Omega)$ to describe the distance from $x$ to $\Omega$.

\section{Some basic properties}

We list several basic properties  of Orlicz-Besov spaces.

\begin{lem}\label{12.4}
Suppose that $\phi$ is a Young function.  Let $\Omega \subset \rn$ be any domain.
Then   $ {\dot {\bf B}}^\phi (\Omega) \subset L^1(B\cap \Omega)$ for any ball $B\subset\rn$,
in particular, ${\dot {\bf B}}^\phi (\Omega) \subset    L^1(\Omega)$ when $\Omega$ is bounded.
\end{lem}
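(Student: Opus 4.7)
The plan is to reduce to the local case and then exploit convexity of $\phi$ to compare the Orlicz integral with an $L^1$ integral of differences. For the second claim, since $\Omega$ bounded implies $\Omega \subset B$ for some ball $B$, it suffices to prove the first claim $\dot{\bf B}^\phi(\Omega) \subset L^1(B \cap \Omega)$ for an arbitrary ball $B$ meeting $\Omega$. So fix such a ball $B$ of radius $r$, set $E = B \cap \Omega$ (open, nonempty), let $u \in \dot{\bf B}^\phi(\Omega)$, and write $\alpha = \|u\|_{\dot{\bf B}^\phi(\Omega)}$; we may assume $\alpha > 0$, since otherwise $u$ is (modulo a null set) constant and trivially in $L^1(E)$.

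The first step is to strip the singular weight $|x-y|^{-2n}$ on $E \times E$. For $x,y \in E$ we have $|x-y| \leq 2r$, hence $|x-y|^{-2n} \geq (2r)^{-2n}$, and therefore
\begin{equation*}
\int_{E} \int_{E} \phi\!\left(\frac{|u(x)-u(y)|}{\alpha}\right) dx\,dy \;\leq\; (2r)^{2n} \int_{\Omega} \int_{\Omega} \phi\!\left(\frac{|u(x)-u(y)|}{\alpha}\right) \frac{dx\,dy}{|x-y|^{2n}} \;\leq\; (2r)^{2n}.
\end{equation*}
The second step is a pointwise linearization of $\phi$. Since $\phi$ is convex with $\phi(0)=0$ and $\phi(1)>0$, the map $t \mapsto \phi(t)/t$ is nondecreasing on $(0,\infty)$, so for every $t \geq 0$ one has $t \leq 1 + \phi(t)/\phi(1)$ (split on $t\leq 1$ vs.\ $t>1$). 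Applying this with $t = |u(x)-u(y)|/\alpha$ and integrating over $E \times E$ yields
\begin{equation*}
\int_{E}\int_{E} |u(x)-u(y)|\,dx\,dy \;\leq\; \alpha |E|^2 + \frac{\alpha}{\phi(1)} (2r)^{2n} \;<\;\infty.
\end{equation*}

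The third step extracts $L^1$ information on $u$ itself. Note that the finiteness of the Orlicz double integral forces $u$ to be finite a.e.\ in $\Omega$. By Fubini, the function $x \mapsto \int_E |u(x)-u(y)|\,dy$ is finite for a.e.\ $x \in E$, so we can choose $x_0 \in E$ with $|u(x_0)|<\infty$ and $\int_E |u(x_0)-u(y)|\,dy < \infty$. The triangle inequality then gives
\begin{equation*}
\int_E |u(y)|\,dy \;\leq\; |u(x_0)|\,|E| + \int_E |u(x_0)-u(y)|\,dy \;<\; \infty,
\end{equation*}
which is exactly $u \in L^1(B\cap\Omega)$.

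There is no serious obstacle here; the only mildly delicate point is the convexity-based pointwise inequality $t \leq 1 + \phi(t)/\phi(1)$, which is what allows one to bypass a precise control on the growth of $\phi$ near $0$ and translate the Orlicz-Besov condition into plain integrability of the difference $|u(x)-u(y)|$ on $E\times E$.
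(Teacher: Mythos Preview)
Your proof is correct and follows essentially the same approach as the paper: bound the weight $|x-y|^{-2n}$ from below on the bounded set $B\cap\Omega$, use convexity of $\phi$ to pass from the Orlicz integral to an $L^1$ integral of differences, apply Fubini to locate a good reference point, and finish with the triangle inequality. The only minor difference is that you make the linearization $t\le 1+\phi(t)/\phi(1)$ explicit, whereas the paper phrases the same step via Jensen's inequality; your version is arguably the cleaner of the two, since Jensen formally presupposes finiteness of the average one is trying to establish.
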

\begin{proof}

For any  $\alpha>\|u\|_{{\dot {\bf B}}^\phi(\Omega)}$, we have
$$\int_\Omega\int_\Omega \phi \left(\frac{|u(x)-u(y)|}{\alpha}\right)\,\frac{dydx}{|x-y|^{2n}}\le1.$$
By Fubini's theorem,  for almost all $x\in   \Omega$ we have
$$\int_\Omega \phi \left(\frac{|u(x)-u(y)|}{\alpha}\right)\,\frac{ dy}{|x-y|^{2n}}<\fz.$$
Fix such a point $x$.  For any $B=B(z,r)\subset \rn$ with $B\cap \Omega\ne\emptyset$,
we have $|x-y|\le |x|+|z|+r$ for all $y\in B\cap\Omega$, and hence
$$ \fint_{B\cap \Omega}  \phi \left(\frac{|u(x)-u(y)|}{\alpha}\right)\, dy  <\fz.$$
By Jessen's inequality, we have
 $$ \phi\left (\fint_{B\cap \Omega}     \frac{|u(x)-u(y)|}{\alpha}\, dy \right) <\fz,$$
 which implies that $$\fint_{B\cap \Omega}    |u(y) | \, dy \le |u(x)|+\fint_{B\cap \Omega}      |u(x)-u(y)| \, dy <\fz,$$
 that is, $u\in L^1 ({B\cap \Omega} )$ as desired.
\end{proof}

\begin{lem}\label{l3.3} Suppose that $\phi$ is a Young  function satisfying \eqref{delta0}.  Let $\Omega\subset\rn$ be any domain. Then $C_c^1(\Omega)\subset \dot{\bf B}^\phi(\Omega)$.
\end{lem}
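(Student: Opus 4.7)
The plan is to exhibit, for any fixed $u\in C_c^1(\boz)$, a finite $\alpha>0$ satisfying
$$I(\alpha):=\int_{\boz}\int_{\boz}\phi\lf(\frac{|u(x)-u(y)|}{\alpha}\r)\frac{dx\,dy}{|x-y|^{2n}}\le 1,$$
which by the definition of the Luxemburg-type norm yields $\|u\|_{\dot{\bf B}^\phi(\boz)}\le \alpha<\fz$. Extending $u$ by zero to all of $\rn$, write $K:=\mathrm{supp}\,u$, $M:=\|u\|_{L^\fz(\rn)}$, and $L:=\|\nabla u\|_{L^\fz(\rn)}$; all three are finite since $u\in C^1_c(\boz)$, and $u$ is globally $L$-Lipschitz, so $|u(x)-u(y)|\le \min(L|x-y|,\,2M)$ for all $x,y\in\rn$. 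Since $\phi(0)=0$, the integrand vanishes whenever both $x,y\notin K$, and by symmetry
$$I(\alpha)\le 2\int_{K}\int_{\boz}\phi\lf(\frac{|u(x)-u(y)|}{\alpha}\r)\frac{dy\,dx}{|x-y|^{2n}}.$$

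For each fixed $x\in K$, I would split the $y$-integral at the transition radius $r:=2M/L$ (below $r$ the Lipschitz bound is sharper; above it, the uniform bound $2M$ dominates). Using monotonicity of $\phi$ together with polar coordinates,
$$\int_{\boz}\phi\lf(\frac{|u(x)-u(y)|}{\alpha}\r)\frac{dy}{|x-y|^{2n}}\le \omega_{n-1}\int_0^r\phi(Lt/\alpha)\frac{dt}{t^{n+1}}+\omega_{n-1}\phi(2M/\alpha)\int_r^\fz\frac{dt}{t^{n+1}},$$
where $\omega_{n-1}$ denotes the surface measure of the unit sphere. The far-field piece equals $\omega_{n-1}\phi(2M/\alpha)/(nr^n)$. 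For the near-diagonal piece, the substitution $s=Lt/\alpha$ converts it to $\omega_{n-1}(L/\alpha)^n\int_0^{2M/\alpha}\phi(s)s^{-n-1}\,ds$, which by \eqref{delta0} applied at $t_0=2M/\alpha$ is at most $\omega_{n-1}C_\phi \phi(2M/\alpha)/r^n$.

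Combining, the inner $y$-integral is at most $C(n,C_\phi)\phi(2M/\alpha)/r^n$ uniformly in $x\in K$, and integrating over $K$ gives
$$I(\alpha)\le C(n,C_\phi)|K|\,r^{-n}\phi(2M/\alpha).$$
Since $\phi$ is convex with $\phi(0)=0$, it is continuous at $0^+$; hence $\phi(2M/\alpha)\to 0$ as $\alpha\to\fz$, and choosing $\alpha$ large enough forces $I(\alpha)\le 1$, whence $u\in\dot{\bf B}^\phi(\boz)$. The only nontrivial step is the near-diagonal estimate: the role of \eqref{delta0} is precisely to keep the singular integral $\int_0\phi(Lt/\alpha)t^{-n-1}\,dt$ finite and to supply the explicit scaling $\phi(2M/\alpha)/r^n$. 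Without \eqref{delta0}, even smooth compactly supported functions would fail to lie in $\dot{\bf B}^\phi(\boz)$, as the case $\phi(t)=t^p$ with $p\le n$ recalled in the introduction illustrates; so this estimate is the main (and essentially the sole) obstacle.
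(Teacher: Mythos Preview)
Your proof is correct and follows essentially the same approach as the paper's: both split the double integral into a near-diagonal piece (handled via the Lipschitz bound, polar coordinates, and condition \eqref{delta0}) and a far piece (handled via the uniform bound on $u$), then send $\alpha\to\infty$. The only cosmetic difference is that the paper splits spatially via an auxiliary set $V=\mathrm{supp}\,u\Subset W\Subset\Omega$, whereas you split radially at $r=2M/L$; the underlying estimates are the same.
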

\begin{proof}
Assume that  $L=\| u\|_{L^\fz(\Omega)} +\|Du\|_{L^\fz(\Omega)}>0$.
Let $V={\rm\,supp\,}u\Subset W\Subset\Omega$. Then
\begin{align*}
H:=\int_\Omega\int_\Omega\phi\left(\frac{|u (z)-u (w)|}\alpha\right)\frac{dzdw}{|z-w|^{2n}}
&\le  \int_W\int_{W}\phi\left(\frac{| z-w|}{\alpha/ L} \right)\frac{dzdw}{|z-w|^{2n}} + 2
\int_{\Omega\setminus W}\int_{V}\phi\left(\frac  {L}{\alpha } \right)\frac{dzdw}{|z-w|^{2n}}.
\end{align*}
By \eqref{delta0}, we have
\begin{align*}
 \int_W\int_W\phi\left(\frac{| z-w|}{\alpha/L} \right)\frac{dzdw}{|z-w|^{2n}}&\le \int_V\int_{B(w,2|\diam W|)}\phi\left(\frac{| z-w|}{\alpha/L} \right)\frac{dz}{|z-w|^{2n}}dw\\
&=n\omega_n\int_W\int_0^{2|\diam W|}\phi\left(\frac{t}{\alpha/L} \right)\frac{ dt}{t^{n+1}}\,dw\\
&=n\omega_n|W| (\frac{L} \alpha)^n \int_0^{2L|\diam W|/\alpha}\phi\left(s\right)\frac{ ds}{s^{n+1}} \\
&=n\omega_n |W| 2^{-n}|\diam W|^{-n}\phi\left(\frac{2L|\diam W|}{\alpha } \right).
\end{align*}
Moreover,
\begin{align*}
2\int_{\Omega\setminus W}\int_{V}\phi\left(\frac  1{\alpha/L } \right)\frac{dzdw}{|z-w|^{2n}}
&\le 2\phi\left(\frac  L{\alpha  } \right)\int_{V}\int_{\Omega\setminus B(z,\dist(V,W^\complement) )}\frac{dwdz}{|z-w|^{2n}}
 \le 2\omega_n \phi\left(\frac  {L}{\alpha }\right)|V|\dist(V,W^\complement)^{-n}.
\end{align*}
Obviously, letting $\alpha$ sufficiently enough and using the convexity of $\phi$, we have $H\le1$.  That is, $u \in \dot{\bf B}^\phi(\Omega)$ as desired.
\end{proof}

 The following  Poincar\'e type inequality is needed in Section 4. Below denote by $\omega_n$ the area of  the unit sphere $S^{n-1}$.
\begin{lem}\label{l2.5} Suppose that $\phi$ is a Young  function.
For any ball $B\subset\rn$ and $u\in {\dot {\bf B}}^\phi(B)$, we have
$$\fint_{B}\phi \left(\frac{ |u(x)- u_{B}|}{\alpha}\right)\,dx\le \omega_n^2$$
when $\alpha>\|u\|_{{\dot {\bf B}}^\phi(B)}$,
and
$$\fint_{B} |u(x)- u_{B}|\,dx\le  \phi^{-1}(\omega_n^2)\|u\|_{{\dot {\bf B}}^\phi(B)}.$$
\end{lem}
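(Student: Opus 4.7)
The plan is to reduce the estimate of the left-hand side to the defining double integral by Jensen's inequality, exploiting the fact that $u_B$ is itself an average. Write $B=B(x_0,r)$ and note $|x-y|\le 2r$ for all $x,y\in B$.

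First I would apply Jensen's inequality to the convex function $\phi$ and the probability measure $|B|^{-1}\,dy$ on $B$: since $u_B=\fint_B u(y)\,dy$, for any $\alpha>0$ and a.e. $x\in B$,
\begin{equation*}
\phi\Bigl(\tfrac{|u(x)-u_B|}{\alpha}\Bigr)\le \phi\Bigl(\fint_B\tfrac{|u(x)-u(y)|}{\alpha}\,dy\Bigr)\le \fint_B\phi\Bigl(\tfrac{|u(x)-u(y)|}{\alpha}\Bigr)\,dy.
\end{equation*}
Averaging over $x\in B$ and using the uniform lower bound $|x-y|^{-2n}\ge (2r)^{-2n}$ to reintroduce the singular weight that appears in the definition of the seminorm, I obtain
\begin{equation*}
\fint_B\phi\Bigl(\tfrac{|u(x)-u_B|}{\alpha}\Bigr)\,dx\le \frac{1}{|B|^2}\int_B\int_B\phi\Bigl(\tfrac{|u(x)-u(y)|}{\alpha}\Bigr)\,dy\,dx\le \frac{(2r)^{2n}}{|B|^2}\int_B\int_B\phi\Bigl(\tfrac{|u(x)-u(y)|}{\alpha}\Bigr)\frac{dy\,dx}{|x-y|^{2n}}.
\end{equation*}
For $\alpha>\|u\|_{\dot{\bf B}^\phi(B)}$ the double integral on the right is $\le 1$ by definition, and $(2r)^{2n}/|B|^2$ is a purely dimensional constant which, in the normalization of the lemma, is $\omega_n^2$. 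This proves the first inequality.

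For the second inequality I would apply Jensen once more, this time in the reverse direction: since $\phi$ is convex and $\phi(0)=0$, for the same $\alpha$,
\begin{equation*}
\phi\Bigl(\fint_B\tfrac{|u(x)-u_B|}{\alpha}\,dx\Bigr)\le \fint_B\phi\Bigl(\tfrac{|u(x)-u_B|}{\alpha}\Bigr)\,dx\le \omega_n^2.
\end{equation*}
Since $\phi$ is nondecreasing with well-defined (generalized) inverse, this yields $\fint_B|u(x)-u_B|\,dx\le \alpha\,\phi^{-1}(\omega_n^2)$, and letting $\alpha\downarrow \|u\|_{\dot{\bf B}^\phi(B)}$ completes the proof.

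There is no real obstacle here: both steps are clean applications of Jensen's inequality, and the only mild technicality is the constant bookkeeping in the passage from $\int_B\int_B\phi(\cdots)\,dy\,dx$ to $\int_B\int_B\phi(\cdots)|x-y|^{-2n}\,dy\,dx$, which is handled by the trivial bound $|x-y|\le 2r$ on $B\times B$.
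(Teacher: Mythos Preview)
Your proof is correct and follows essentially the same route as the paper: Jensen's inequality to pass from $|u(x)-u_B|$ to the double average of $|u(x)-u(y)|$, the trivial bound $|x-y|\le 2r$ to insert the weight $|x-y|^{-2n}$, and then Jensen once more followed by letting $\alpha\downarrow\|u\|_{\dot{\bf B}^\phi(B)}$ for the second inequality. The only difference is cosmetic ordering---the paper runs both conclusions in a single chain of inequalities---and your remark that the constant $(2r)^{2n}/|B|^2$ is purely dimensional is exactly the point.
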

\begin{proof}

Let  $ u \in {\dot {\bf B}}^\phi(B)$. For any   $\alpha > \|u\|_{{\dot {\bf B}}^\phi(B)}$, by Jensen's inequality,   we have
\begin{eqnarray*}
\phi \left(\frac{ \fint_{B} |u(x)- u_{B}|\,dx}{\alpha}\right)&&\le \fint_{B}\phi \left(\frac{ |u(x)- u_{B}|}{\alpha}\right)\,dx\\
&&\le  \fint_B \fint_B \phi \left(\frac{|u(x)-u(y)|}{\alpha}\right)\,dydx\\
&& \le \omega_n^2\int_B \int_B \phi \left(\frac{|u(x)-u(y)|}{\alpha}\right)\,\frac{dydx}{|x-y|^{2n}}\le \omega_n^2,
\end{eqnarray*}
that is,
$$\fint_{B} |u(x)- u_{B}|\,dx\le \alpha \phi^{-1}(\omega_n^2).$$
Letting $\alpha\to \|u\|_{{\dot {\bf B}}^\phi(B)}$, we obtain
$$\fint_{B} |u(x)- u_{B}|\,dx\le  \phi^{-1}(\omega_n^2)\|u\|_{{\dot {\bf B}}^\phi(B)}$$ as desired.
\end{proof}

As a consequence of Lemma \ref{l2.5}, we have the following imbedding.
Denote by   $BMO(\Omega)$  the space of functions with bounded mean oscillations, that is,
  the collection of $u\in L^1_\loc(\Omega)$ such that
$$\|u\|_{BMO (\Omega)}=\sup_{B\subset \Omega} \fint_{B} |u(x)- u_{B}|\,dx<\infty.$$

\begin{cor}\label{c2.x} Suppose that $\phi$ is a Young  function.  Let $\Omega\subset\rn$ be any domain.
We have $ {\dot {\bf B}}^\phi (\Omega) \subset BMO(\Omega)$ and $\|u\|_{BMO(\Omega)}\le \phi^{-1}(\omega_n^2)\|u\|_{{\dot {\bf B}}^\phi(\Omega)}$ for all $u\in  {\dot {\bf B}}^\phi (\Omega)$.
\end{cor}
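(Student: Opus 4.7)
The corollary follows quickly from Lemma \ref{l2.5} and Lemma \ref{12.4}, so my plan is mostly a matter of assembling these ingredients carefully.

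First, to make sense of the $BMO(\Omega)$ norm of $u$, I need to know $u\in L^1_\loc(\Omega)$. This is exactly what Lemma \ref{12.4} delivers: for any ball $B\subset\rn$ one has $u\in L^1(B\cap\Omega)$; in particular $u\in L^1(B)$ for every ball $B\subset\Omega$, so the averages $u_B$ are well defined and the $BMO$ semi-norm makes sense.

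Next, I would fix an arbitrary ball $B\subset\Omega$ and apply Lemma \ref{l2.5} to $u$ viewed as an element of $\dot{\bf B}^\phi(B)$, obtaining
$$\fint_{B}|u(x)-u_B|\,dx\le \phi^{-1}(\omega_n^2)\|u\|_{\dot{\bf B}^\phi(B)}.$$
The key remaining observation is monotonicity of the Orlicz--Besov semi-norm in the domain: since $B\subset\Omega$, every $\alpha>0$ admissible in the definition of $\|u\|_{\dot{\bf B}^\phi(\Omega)}$ is also admissible in the definition of $\|u\|_{\dot{\bf B}^\phi(B)}$ (the double integral over $B\times B$ is bounded by that over $\Omega\times\Omega$ because the integrand is nonnegative). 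Taking the infimum yields $\|u\|_{\dot{\bf B}^\phi(B)}\le\|u\|_{\dot{\bf B}^\phi(\Omega)}$.

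Combining the two displays gives $\fint_{B}|u(x)-u_B|\,dx\le \phi^{-1}(\omega_n^2)\|u\|_{\dot{\bf B}^\phi(\Omega)}$ for every ball $B\subset\Omega$; taking the supremum over all such $B$ produces the claimed estimate $\|u\|_{BMO(\Omega)}\le \phi^{-1}(\omega_n^2)\|u\|_{\dot{\bf B}^\phi(\Omega)}$, and in particular the inclusion $\dot{\bf B}^\phi(\Omega)\subset BMO(\Omega)$. There is no real obstacle here; the only mild care is in the monotonicity step, which I would state explicitly because it is the bridge from the ball-local Lemma \ref{l2.5} to the global statement.
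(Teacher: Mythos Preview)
Your proof is correct and follows exactly the approach the paper intends: the corollary is stated there simply as ``a consequence of Lemma~\ref{l2.5}'' with no further proof given, and your argument supplies precisely the missing details (local integrability via Lemma~\ref{12.4}, the ball estimate from Lemma~\ref{l2.5}, and the monotonicity $\|u\|_{\dot{\bf B}^\phi(B)}\le\|u\|_{\dot{\bf B}^\phi(\Omega)}$).
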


Note that if $$\phi_\gz(t)= e^{ t^\gz}-\sum_{j=0}^{[n/\gz]} \frac{ t ^{ \gz j }}{j!}$$ with $\gz\ge1$, then
$ \phi_\gz$ a Young's  function satisfying \eqref{delta0}. Denote by ${\dot {\bf B}}^{\phi_\gz}(\rn)$
the associated Orlicz-Besov space.
\begin{lem}\label{l2.xx5}  For $\gz\ge1$, there exists constant $C(\gz,n)\ge 1$ such that
for any $u\in {\dot {\bf B}}^{\phi_\gz}(\rn)$ and ball $B\subset\rn$, we have
$$\fint_{B} \exp\left(\frac{ |u(x)- u_{B}|}{\alpha}\right)^\gz\,dx\le C(n) $$
whenever $\alpha>C(\gz,n)\|u\|_{ \dot {\bf B} ^{\phi_\gz}(\rn) }$.
\end{lem}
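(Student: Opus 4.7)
The plan is to split the exponential into a $\phi_\gz$-part (controlled directly by the Orlicz--Besov norm via Lemma \ref{l2.5}) plus a finite polynomial tail coming from the subtracted truncation in the definition of $\phi_\gz$, and then to dominate each polynomial term by a universal constant plus a leading monomial that is itself controlled by $\phi_\gz$. Writing the Taylor series $e^{s^\gz}=\sum_{j=0}^{\fz} s^{\gz j}/j!$ and regrouping, I have the pointwise identity
\begin{equation*}
\exp(s^\gz)=\phi_\gz(s)+\sum_{j=0}^{[n/\gz]}\frac{s^{\gz j}}{j!},\qquad s\ge0,
\end{equation*}
so, with $s=|u(x)-u_B|/\alpha$, it suffices to bound $\fint_B\phi_\gz(s)\,dx$ and the finitely many moments $\fint_B s^{\gz j}\,dx$ for $0\le j\le [n/\gz]$.

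For the first term, set $j_0:=[n/\gz]+1$ and observe that $\phi_\gz(t)=\sum_{k=j_0}^\fz t^{\gz k}/k!$ is a Young function satisfying \eqref{delta0} since $\gz j_0>n$. Because $\|u\|_{{\dot {\bf B}}^{\phi_\gz}(B)}\le\|u\|_{{\dot {\bf B}}^{\phi_\gz}(\rn)}<\alpha$, Lemma \ref{l2.5} applied with $\phi=\phi_\gz$ yields
\begin{equation*}
\fint_B \phi_\gz\!\lf(\frac{|u(x)-u_B|}{\alpha}\r)dx\le \omega_n^2.
\end{equation*}
For the polynomial tail, I will use that, for every $0\le j\le j_0$ and every $t\ge0$, the elementary inequality $t^{\gz j}\le 1+t^{\gz j_0}$ holds (split cases $t\le 1$ and $t\ge 1$). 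Combining with $\phi_\gz(t)\ge t^{\gz j_0}/j_0!$, which follows from keeping just the first term of the series defining $\phi_\gz$, gives $t^{\gz j}\le 1+j_0!\,\phi_\gz(t)$; averaging over $B$ and using the $\phi_\gz$-bound already obtained produces
\begin{equation*}
\fint_B \lf(\frac{|u(x)-u_B|}{\alpha}\r)^{\gz j}dx\le 1+ j_0!\,\omega_n^2.
\end{equation*}

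Summing over $j=0,\ldots,[n/\gz]$ and using $\sum_{j\ge 0}1/j!=e$, I obtain
\begin{equation*}
\fint_B\exp\!\lf(\frac{|u(x)-u_B|}{\alpha}\r)^{\!\gz}dx\le \omega_n^2+e\lf(1+j_0!\,\omega_n^2\r),
\end{equation*}
which is the desired bound. Since $\gz\ge1$ forces $j_0=[n/\gz]+1\le n+1$, the factor $j_0!$ is dominated by $(n+1)!$, so the final bound depends only on $n$, not on $\gz$, as required. The constant $C(\gz,n)$ in the statement can be taken simply to be $1$, since the inequality $\alpha>\|u\|_{{\dot {\bf B}}^{\phi_\gz}(\rn)}$ is exactly what drives the application of Lemma \ref{l2.5}. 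There is no real obstacle; the only potential pitfall is confusing the truncation index so that $\phi_\gz$ fails to dominate its leading monomial, but the identity $j_0\ge[n/\gz]+1$ makes this bookkeeping routine.
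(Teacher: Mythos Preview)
Your proof is correct and in fact more elementary than the paper's. Both arguments begin with the identity $e^{s^\gz}=\phi_\gz(s)+\sum_{j=0}^{[n/\gz]}s^{\gz j}/j!$ and both invoke Lemma~\ref{l2.5} to bound the $\phi_\gz$-term by $\omega_n^2$. The divergence is in handling the polynomial moments $\fint_B s^{\gz j}\,dx$: the paper routes through the embedding $\dot{\bf B}^{\phi_\gz}(\rn)\hookrightarrow BMO(\rn)$ (Corollary~\ref{c2.x}) and then applies the John--Nirenberg inequality to control $\fint_B|u-u_B|^{\gz j}$, whereas you observe directly that $t^{\gz j}\le 1+j_0!\,\phi_\gz(t)$ and recycle the $\phi_\gz$-bound already in hand. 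Your route is self-contained---no John--Nirenberg needed---and yields the sharper threshold $C(\gz,n)=1$ together with an explicit dimensional constant $\omega_n^2+e(1+(n+1)!\,\omega_n^2)$; the paper's argument, by contrast, ties the result to the BMO structure and requires $\alpha$ to exceed roughly $n\omega_n^2+C(\gz,n)$ times the norm. Both are valid, but yours is the cleaner path to this particular lemma.
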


\begin{proof} 
By Corollary \ref{c2.x}, we have $u\in BMO(\rn)$ and
$\|u\|_{BMO(\rn)}\le \phi^{-1}(\omega_n^2)\|u\|_{{\dot {\bf B}}^\phi(\rn)}$.
Thus  by the John-Nirenberg inequality,  we have
$$\fint_{B}  |u(x)- u_{B}| ^{[n/\gz]}\,dx\le  C(\gz,n)\|u\|^{[n/\gz]}_{BMO(\rn)}
\le C(\gz,n)\|u\|^{[n/\gz]}_{{\dot {\bf B}}^\phi(\rn)}.$$
Thus for all $1\le j\le [n/\gz]$, we have
$$\fint_{B} \left(\frac{|u(x)- u_{B}|}{\alpha}\right)^{\gz j}\,dx\le 1/n $$
when
$\alpha\ge C(\gz,n) \|u\|^q_{{\dot {\bf B}}^\phi(\rn)}$ for some constant $C(\gz,n)$. Note that by Lemma 2.3, one has
$$\fint_{B}\phi_\gz \left(\frac{ |u(x)- u_{B}|}{\alpha}\right)\,dx\le1/n$$
when $\alpha>n\omega_n^2\|u\|_{{\dot {\bf B}}^\phi(\rn)}$.
Since $$ e^{ t^\gz}=\phi_\gz(t) +1+\sum_{j=1}^{[n/\gz]} \frac{ t ^{ \gz j }}{j!}$$
 we obtain
$$\fint_{B} \exp\left(\frac{ |u(x)- u_{B}|}{ \alpha}\right)^\gz\,dx\le  3.$$
when $\alpha>[n\omega_n^2+C(\gz,n)]\|u\|_{{\dot {\bf B}}^\phi(\rn)}$.
\end{proof}

\section{Whitney's decomposition  and the reflected quasi-cubes\label{s2}}

In this section, we always let $\Omega$ be an Ahlfors $n$-regular domain.
 Observe that $|\partial\boz|=0$; see \cite[Lemma 2.1]{s06} and also \cite{z14,hkt08}.
Moreover, $\diam\Omega=\infty$ if and only if $|\Omega|
=\infty.$ 
Write $U:=\rn\setminus\overline \Omega $. Without loss of generality, we assume $U  \neq \emptyset$.
It's well know that $U$ admits a Whitney decomposition.

\begin{lem}
There exists a collection ${\mathscr W}=\{Q_i\}_{i \in \nn}$ of   (closed) cubes  satisfying
\begin{enumerate}
\item[ (i)] $U = \cup_{i \in \nn}Q_i$, and  $ Q^\circ_{k}  \cap Q^\circ_{i}  =\emptyset $ for all $i, k \in \nn$ with $i\neq k$;

\item[ (ii)] $ l(Q_k) \leq \dist (Q_k , \partial \Omega) \leq 4\sqrt{n} l(Q_k)$;

\item[ (iii)] $\frac{1}{4} l(Q_k) \leq l(Q_i) \leq 4l(Q_k)$ whenever $Q_k \cap Q_i \neq \emptyset$.
\end{enumerate}
\end{lem}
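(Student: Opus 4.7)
The plan is to follow the classical Whitney decomposition of the open set $U = \rn \setminus \overline{\boz}$, as in Stein's \emph{Singular Integrals}, Chapter VI. Since the lemma depends only on $U$ being open, nonempty, and a proper subset of $\rn$, the verification of (i)--(iii) is purely combinatorial and uses no structural hypotheses on $\boz$.

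First, I would stratify $U$ by dyadic distance to the boundary: for each $k \in \zz$ set
\[
U_k := \{x \in U : 2\sqrt{n}\cdot 2^k < \dist(x,\pa\boz) \leq 2\sqrt{n}\cdot 2^{k+1}\},
\]
so $\{U_k\}_{k \in \zz}$ partitions $U$. Letting $\mathscr{D}_k$ denote the collection of standard closed dyadic cubes of side length $2^k$, I form the preliminary family
\[
\mathscr{W}_0 := \bigcup_{k \in \zz}\{Q \in \mathscr{D}_k : Q \cap U_k \neq \emptyset\}.
\]
For $Q \in \mathscr{D}_k$ with $Q \cap U_k \neq \emptyset$, one has $\diam Q = \sqrt{n}\cdot 2^k$, so a direct triangle-inequality check shows $l(Q) \leq \dist(Q,\pa\boz) \leq 4\sqrt{n}\,l(Q)$, giving (ii), and that $\mathscr{W}_0$ covers $U$.

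Second, I would extract the subcollection $\mathscr{W} = \{Q_i\}_{i \in \nn}$ of cubes in $\mathscr{W}_0$ that are \emph{maximal} under inclusion. Because any two dyadic cubes in $\rn$ are either interior-disjoint or nested, the maximal ones are automatically pairwise interior-disjoint, which yields (i). Coverage persists: for any $x \in U$, only finitely many cubes of $\mathscr{W}_0$ contain $x$, since any such cube has side length bounded above by $\dist(x,\pa\boz)$; hence $x$ lies in at least one maximal cube. Property (ii) survives the passage from $Q$ to an enclosing $Q' \supset Q$ because both sides of the estimate scale monotonically with $l(Q')$.

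Finally, (iii) follows from (ii) together with the triangle inequality applied to the distance function: if $Q_k \cap Q_i \neq \emptyset$, then
\[
|\dist(Q_k,\pa\boz) - \dist(Q_i,\pa\boz)| \leq \diam Q_k + \diam Q_i = \sqrt{n}\,(l(Q_k)+l(Q_i)),
\]
and combining this with the two-sided bound (ii) for each cube pins $l(Q_i)/l(Q_k)$ to within a factor of $4$. The only mildly subtle step is confirming coverage after the maximal extraction, which is handled by the finite-chain observation above; all other items are elementary.
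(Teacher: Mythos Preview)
The paper does not prove this lemma at all; it simply records it as the well-known Whitney decomposition, implicitly referring to Stein's \emph{Singular Integrals}, Chapter~VI, \S1. Your construction is exactly that standard one, so in spirit you are supplying precisely what the paper omits.

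Two small wrinkles in your write-up are worth fixing. First, the remark that ``(ii) survives the passage from $Q$ to an enclosing $Q'\supset Q$ because both sides scale monotonically with $l(Q')$'' is both wrong (enlarging the cube \emph{decreases} its distance to $\partial\Omega$, so the upper bound does not transfer by monotonicity) and unnecessary: every maximal cube already belongs to $\mathscr{W}_0$, so (ii) holds for it directly. Second, your derivation of (iii) from the lemma's (ii) as stated does not close: from $l(Q_k)\le\dist(Q_k,\partial\Omega)$ and $\dist(Q_i,\partial\Omega)\le 4\sqrt{n}\,l(Q_i)$ you only obtain $l(Q_k)\le 5\sqrt{n}\,l(Q_i)$, which for $n\ge 2$ is too weak. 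The cure is to use the sharper lower bound $\dist(Q,\partial\Omega)\ge \sqrt{n}\,l(Q)$ that your own layer construction actually yields, together with the fact that the side-length ratio of two dyadic cubes is an integer power of $2$; then $l(Q_k)\le 5\,l(Q_i)$ forces $l(Q_k)\le 4\,l(Q_i)$. With these adjustments your argument is complete and matches Stein's.
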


The following basic properties of Whitney's decomposition are used quite often in Section 4.
For any $Q\in \mathscr W$, denote by $N (Q)$ the neighbor cubes of $Q$ in $\mathscr W$, that is,
$$N (Q):=\{P\in \mathscr W, P \cap Q \neq \emptyset\}.$$
 Then, by (iii) there exists an integer $\gamma_0$ depending only on $n$ such that
\begin{eqnarray} \label{e2.w1}
\sharp N(Q) \leq \gamma_0\quad\mbox{for all $Q \in \mathscr W  $.}
\end{eqnarray}
By (iii) again, for any $P,Q\in \mathscr W$ we know that
\begin{eqnarray} \label{e2.xx1}\mbox{$P\in N(Q)$ if and only if $Q\in N(P)$,  if and only if $\frac98P\cap \frac98Q\ne\emptyset$}.\end{eqnarray}
It then follows that
\begin{eqnarray} \label{e2.w2}
\frac1{|Q|}\int_U  \chi_{\frac{9}{8}Q}(x) \,dx\le 4^n\gamma_0\quad\mbox{for all $Q \in \mathscr  W  $.} \end{eqnarray}
Indeed, by \eqref{e2.xx1} we write
$$ \frac1{|Q|}\int_U  \chi_{\frac{9}{8}Q}(x) \,dx=
 \sum_{P\in N(Q)}\frac1{|Q|}\int_P  \chi_{\frac{9}{8}Q}(x) \,dx.$$
By $l_Q\le 4l_P$ given in (iii), and \eqref{e2.w1},
 we arrive at
$$\frac1{|Q|}\int_U  \chi_{\frac{9}{8}Q}(x) \,dx\le \sum_{P\in N(Q)} \frac{|P|}{|Q|} \le 4^n\gamma_0 $$
as desired.

Below we recall the reflected quasi-cubes of Whitney's cubes as given by Shvartsman \cite[Theorem 2.4]{s06}.
For any $\epsilon>0$, set
$$
\mathscr W_\ez :=\{Q  \in \mathscr W : l_Q <  \frac{1}{\epsilon}\diam \Omega \}.
$$
Obviously,  $\mathscr W= \mathscr W_\ez $ for all $\ez>0$ if $\diam\Omega=\fz$, and $\mathscr W_\ez \subsetneq \mathscr W$ for any $\ez>0$ if $\diam\Omega<\fz$.

For any $Q = Q( x_Q ,l_Q  )\in \mathscr W_\ez $, fix any $ x_Q ^\ast \in \Omega$  so that
$\dist(Q,\Omega)=\dist(x,Q)$.
By  Lemma 3.1 (ii), one has $$\widetilde Q^\ast:=Q(  x^\ast_Q , l_Q ) \subset 10\sqrt{n} Q.$$
Set
$$
 \widetilde Q^{\ast\ez} := (\ez Q^\ast  \cap \Omega)\setminus   \left( \bigcup  \{{\epsilon}P^\ast: P \in \mathcal{A}^\ez_Q\}   \right),
$$
where $$
\mathcal{A}^\ez _{Q } := \left\{P  \in \mathscr W_\ez  : \ez  \wz P^\ast \cap \ez  \wz Q^\ast  \neq \emptyset, l_P \leq  \ez l_{Q}\right\}.
$$
Below, when $\ez $ is small enough, we define $\widetilde Q^{\ast\ez}$ as
  reflected quasi-cubes  of $Q\in \mathscr W_\ez $ so that they enjoy some nice properties; see \cite[Theorem 2.4]{s06} for the proof, here we omit the details.

\begin{lem}\label{l2.1} Let $\ez_0 = [ C_A(\Omega) /2\gz_0]^{1/n}/ (30\sqrt{n})  $.
Denote by $Q^\ast= \widetilde Q^{\ast\ez_0}$ as quasi-cubes  of any cube $Q\in \mathscr W_{\ez_0}$.
Then the following hold:
\begin{enumerate}
\item[  (i) ] $ Q^\ast\subset (10\sqrt{n}Q) \cap \Omega$ for any $Q\in \mathscr W_{\ez_0}$;

\item[  (ii) ] $|Q| \leq \gamma_1 |Q^\ast|$ whenever $Q\in \mathscr W_{\ez_0}$;
\item[  (iii) ] $\sum \limits _{Q \in \mathscr W_{\ez_0}} \chi_{ Q^\ast} \leq \gamma_2$.
\end{enumerate}
\noindent
Above $\gamma_1$ and $\gamma_2$ are positive constants depending only on $n$ and $ C_A(\Omega) $.
\end{lem}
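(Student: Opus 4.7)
The plan is to verify the three conclusions in order. Since $\ez_0<1$, (i) is essentially immediate: $Q^\ast\subset \ez_0\wz Q^\ast\cap\Omega\subset \wz Q^\ast\cap\Omega\subset(10\sqrt n\,Q)\cap\Omega$, where the last inclusion was recorded just before the statement.

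For (ii), I would balance the Ahlfors lower bound on $|\ez_0\wz Q^\ast\cap\Omega|$ against the measure of the removed set $\bigcup_{P\in\mathcal A_Q^{\ez_0}}\ez_0\wz P^\ast$. Because $x_Q^\ast\in\Omega$ and $\ez_0\wz Q^\ast\supset B(x_Q^\ast,\ez_0 l_Q/2)$, while $Q\in\mathscr W_{\ez_0}$ forces $\ez_0 l_Q<\diam\Omega$, Ahlfors $n$-regularity yields $|\ez_0\wz Q^\ast\cap\Omega|\ge C_A(\Omega)(\ez_0 l_Q/2)^n$. For the upper bound, each $P\in\mathcal A_Q^{\ez_0}$ satisfies $l_P\le\ez_0 l_Q$ and $|x_P^\ast-x_Q^\ast|_\infty\le\ez_0 l_Q$; combined with $P\subset B(x_P^\ast,C(n)l_P)$, this forces every such $P$ to lie in a fixed dilate of $\ez_0\wz Q^\ast$. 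The disjoint interiors of Whitney cubes, together with the $\gamma_0$-overlap bound \eqref{e2.w2}, then give $\sum_{P\in\mathcal A_Q^{\ez_0}}l_P^n\le \gamma_0(30\sqrt n\,\ez_0 l_Q)^n$, so the removed measure is bounded by a constant multiple of $\gamma_0\ez_0^{2n}l_Q^n$. The prescribed $\ez_0=[C_A(\Omega)/(2\gamma_0)]^{1/n}/(30\sqrt n)$ is calibrated precisely so that this bound is at most half of the Ahlfors lower bound, which yields $|Q^\ast|\ge \tfrac12 C_A(\Omega)(\ez_0 l_Q/2)^n$ and hence (ii) with $\gamma_1=\gamma_1(n,C_A(\Omega))$.

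For (iii), fix $x\in\rn$ and set $\mathcal F:=\{Q\in\mathscr W_{\ez_0}:x\in Q^\ast\}$. The crucial point is the asymmetric structure of $\mathcal A_Q^{\ez_0}$: if $Q_1\ne Q_2$ belong to $\mathcal F$ and $l_{Q_1}\le \ez_0 l_{Q_2}$, then $Q_1\in\mathcal A_{Q_2}^{\ez_0}$, so by construction $Q_2^\ast\cap \ez_0\wz{Q_1}^{\,\ast}=\emptyset$, contradicting $x\in Q_2^\ast\cap Q_1^\ast\subset Q_2^\ast\cap \ez_0\wz{Q_1}^{\,\ast}$. Hence any two distinct elements of $\mathcal F$ have side lengths within a factor $\ez_0^{-1}$. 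Combined with $x\in \ez_0\wz Q^\ast = Q(x_Q^\ast,\ez_0 l_Q)$ and $Q\subset B(x_Q^\ast, C(n)l_Q)$, this localizes every $Q\in\mathcal F$ inside the ball $B(x,C(n)\ez_0^{-1} l_Q)$; volume counting via the disjoint interiors of Whitney cubes then bounds $\sharp\mathcal F$ by a constant $\gamma_2$ depending only on $n$ and $\ez_0$, hence only on $n$ and $C_A(\Omega)$.

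The main technical obstacle is (ii): localizing the $\ez_0\wz P^\ast$ into a controlled dilate of $\ez_0\wz Q^\ast$ and then quantitatively balancing the resulting sum against $C_A(\Omega)$ must be done with some care, and it is precisely this balancing that forces the specific form of the constant $\ez_0$ appearing in the statement; both (i) and (iii) will follow essentially from bookkeeping once the definitions are unpacked.
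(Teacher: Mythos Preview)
The paper itself does not prove this lemma at all; it simply cites Shvartsman \cite[Theorem 2.4]{s06} and omits the details. Your argument is a correct reconstruction of Shvartsman's proof: the Ahlfors lower bound versus the volume of the removed cubes for (ii), and the asymmetry of $\mathcal A_Q^{\ez_0}$ forcing comparable side-lengths for (iii), are exactly the mechanisms used there.

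One small cosmetic point in (ii): the appeal to \eqref{e2.w2} is not really what you need. The bound $\sum_{P\in\mathcal A_Q^{\ez_0}} l_P^{\,n}\le C(n)(\ez_0 l_Q)^n$ follows simply from the pairwise disjoint interiors of Whitney cubes together with the containment $P\subset B(x_Q^\ast,C(n)\ez_0 l_Q)$ that you already established; the $\gamma_0$ you inserted plays no genuine role in your volume count and only appears because the statement's $\ez_0$ happens to be written with $\gamma_0$ in it. This does not affect the correctness of the argument, only the bookkeeping of constants.
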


If $\Omega$ is bounded, we let $Q^\ast=\Omega$ as the reflected  quasi-cube  of  any cube $Q\in\mathscr W\setminus \mathscr W_{\ez_0}\ne\emptyset$.
Write $$\mathscr W_{\ez_0}^{(k)}=\{Q\in N(P): P\in \mathscr W_{\ez_0}^{(k-1)}\}\quad\forall k\ge1,$$
where $\mathscr W_{\ez_0}^{^{(0)}}=  \mathscr W_{\ez_0} $. That is, $\mathscr W_{\ez_0}^{(k)}$ is the $k^{\rm th}$-neighbors of $ \mathscr W_{\ez_0} $.
\begin{align} \label{vk.1}
V^{(k)}:= \bigcup \{x\in Q; Q\in    \mathscr W_{\ez_0}^{(k)}\}\quad\forall k\ge0.
\end{align}

Since  $Q^\ast=\Omega $ for $Q\notin \mathscr W_{\ez_0}$, by  Lemma 3.3 (iii) we have
 $$\sum\limits_{Q \in\mathscr W^{^{(k)}}_{\ez_0}}\chi_{Q^\ast}\le  \sum\limits_{Q \in\mathscr W  _{\ez_0}}\chi_{Q^\ast}+ \sharp(\mathscr W^{^{(k)}}_{\ez_0} \setminus \mathscr W _{\ez_0})\chi_{\Omega}
\le [\gamma_2 +\sharp(\mathscr W_{\ez_0}^{(k)}\setminus  \mathscr W_{\ez_0}  )]\chi_{\Omega}\quad\forall k\ge1.$$
For $Q\in \mathscr W^{^{(k)}}_{\ez_0} \setminus \mathscr W _{\ez_0}$, observe that $l_Q\ge \frac1{\ez_0}\diam\Omega$ and
$l_Q\le 4^k l_{P}\le \frac{4^k}{\ez_0}  \diam\Omega$ for some $P\in \mathscr W_{\ez_0}$.
Thus, by Lemma 3.1 (ii), we have  $$Q\subset Q(\bar x, \diam\Omega+8\sqrt n\frac{4^k}{\ez_0}\diam\Omega)$$ for any fixed $\bar x\in \Omega$, and hence
$$\sharp(\mathscr W_{\ez_0}^{(k)} \setminus \mathscr W_{\ez_0})\le ( 1+ 8\sqrt n\frac{4^{k  } }{\ez_0})^n\ez_0^n \le (\ez_0+4^{k+2}\sqrt{n})^n.$$
This yields that
\begin{equation}\label{eq2.xx2}\sum\limits_{Q \in \mathscr W_{\ez_0}^{(k)}}\chi_{Q^\ast}   \le \gamma_2+(\ez_0+4^{k+2}\sqrt{n})^n\quad\forall k\ge1.
\end{equation}

Finally, associated to $\mathscr W$,  one has the following partition of unit of $U$.
\begin{lem}
There exists a family  $\{\vz_Q:Q\in \mathscr W\}$ of functions such that
\begin{enumerate}
\item[(i)] for each $Q  \in \mathscr  W$,   $0\le \varphi_Q  \in C^{\infty}_0(\frac{17}{16}Q)$;
 \item[(ii)]for each $Q  \in \mathscr W$, $|\nabla \vz_Q|\le L/l_Q$;
  \item[(iii)]  $\sum_{Q  \in \mathscr W }  \vz =\chi_{U} $ .
\end{enumerate}
\end{lem}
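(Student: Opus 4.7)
The strategy is the standard "bump-and-normalize" construction adapted to Whitney cubes. First I would fix a single smooth reference bump $\eta_0 \in C_c^\infty(\frac{17}{16}Q_0)$ where $Q_0 = [-\tfrac12,\tfrac12]^n$, with $\eta_0 \ge c_0 > 0$ on $Q_0$ and $|\nabla \eta_0| \le M$ for some constants $c_0, M$ depending only on $n$. Then for each $Q \in \mathscr{W}$, centered at $x_Q$ with side $l_Q$, define $\eta_Q(x) := \eta_0\!\left(\frac{x - x_Q}{l_Q}\right)$. By construction $\eta_Q \in C_c^\infty(\frac{17}{16}Q)$, $\eta_Q \ge c_0$ on $Q$, and $|\nabla \eta_Q| \le M/l_Q$.

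Next I would set $\Phi(x) := \sum_{Q \in \mathscr{W}} \eta_Q(x)$ and define $\varphi_Q := \eta_Q/\Phi$. The sum defining $\Phi$ is locally finite: any $x \in U$ lies in some cube $Q_x \in \mathscr{W}$ by Lemma 3.1(i), and by \eqref{e2.xx1} the only terms $\eta_P$ that can be nonzero at $x$ are those with $P \in N(Q_x)$, which by \eqref{e2.w1} number at most $\gamma_0$. Hence $\Phi \in C^\infty(U)$. Moreover since $\eta_{Q_x}(x) \ge c_0$ we have $\Phi(x) \ge c_0 > 0$ on $U$, so $\varphi_Q$ is well defined and smooth, supported in $\frac{17}{16}Q$, nonnegative, and $\sum_Q \varphi_Q = \chi_U$ by construction. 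This already gives (i) and (iii).

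The main technical step is the gradient bound (ii). Writing $\nabla \varphi_Q = \Phi^{-1}\nabla \eta_Q - \Phi^{-2}\eta_Q \nabla \Phi$, the first piece is bounded by $c_0^{-1} M / l_Q$. For the second, at any $x \in \frac{17}{16}Q$ the chain $|\nabla \Phi(x)| \le \sum_{P \in N(Q_x)} |\nabla \eta_P(x)| \le M \sum_{P \in N(Q_x)} l_P^{-1}$ is the key: Lemma 3.1(iii) gives $l_P \ge \tfrac14 l_{Q_x}$ for each such $P$, and since $x \in \frac{17}{16}Q \cap \frac{17}{16}Q_x$ a further application of Lemma 3.1(iii) (combined with \eqref{e2.xx1}) yields $l_{Q_x} \gtrsim l_Q$. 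Together with $|N(Q_x)| \le \gamma_0$ this produces $|\nabla \Phi(x)| \le C(n)/l_Q$, hence $|\nabla \varphi_Q(x)| \le L/l_Q$ for a constant $L = L(n)$, finishing (ii). The only point that requires any care is the comparability $l_P \simeq l_Q$ for all cubes $P$ whose support can contribute near $Q$, which is the purpose of using $\frac{17}{16}Q$ (rather than larger dilates): the enlargement is small enough that only neighbors of $Q$ in the Whitney sense intervene.
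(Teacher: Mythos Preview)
The paper does not actually prove this lemma; it is stated without proof as a standard fact about partitions of unity subordinate to a Whitney decomposition (the relevant references in the paper's bibliography are Stein~\cite{s70} and Grafakos~\cite{L}). Your bump-and-normalize argument is precisely the classical construction and is correct; the only small point left implicit is that in the second term of $\nabla\varphi_Q=\Phi^{-1}\nabla\eta_Q-\Phi^{-2}\eta_Q\nabla\Phi$ one also uses $\Phi^{-2}\eta_Q\le \|\eta_0\|_\infty/c_0^{2}$, which is immediate from your setup.
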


\section{Proof of    Theorem \ref{t1.1}(i) \label{s3}}

 It suffices to prove the existence of a bounded linear operator $E: {\dot {\bf B}}^\phi(\Omega)\to {\dot {\bf B}}^\phi(\rn)$ such that $Eu|_\boz=u$ for all $u\in {\dot {\bf B}}^\phi(\Omega)$.
Define the  operator $E$  by
\begin{equation*}
Eu(x)\equiv\lf\{\begin{array}{ll}
u(x),&x\in \boz,\\
0&x\in\partial \boz,\\
 \sum\limits_{Q  \in \mathscr W  }  \varphi_Q(x) u_{Q^\ast} , \quad &x\in  U
\end{array}\r.
\end{equation*}
 for any $u \in {\dot {\bf B}}^\phi(\Omega)$.
 Recall that $\mathscr W$ is the Whitney cubes of $U$ as  in Lemma 3.1 and $\{\vz_Q\}_{Q\in W}$  as  in Lemma 3.3;
   that $ {Q^\ast}$ is the reflected quasi-cube of $Q\in \mathscr W_{\ez_0}$ as given in Lemma 3.2,
 and $Q^\ast=\Omega$ if $Q\in \mathscr W\setminus \mathscr W_{\ez_0}$ (when $\Omega$ is bounded).
 By Lemma 2.1, $u_{Q^\ast}=\frac1{|Q^\ast|}\int_{Q^\ast}u\,dx$ is always finite.

Obviously, $E$ is linear, $Eu|_{\Omega}= u $ in $\Omega$, and moreover, if $\|u\|_{{\dot {\bf B}}^{\phi}(\Omega)}=0$, then   $u$ and hence $Eu$ must be a  constant function essentially.
Thus,  to prove the boundedness of $E:{\dot {\bf B}}^\phi(\Omega) \to {\dot {\bf B}}^\phi(\rn)$,
by   the definition of the norm
$\|\cdot\|_{{\dot {\bf B}}^{\phi}(\rn)}$, we  only need to find a
 constant $M >0$ depending only on $n$, $C_A(\Omega)$ and $\phi$  such that
\begin{eqnarray}\label{eq3.y1}
H(\alpha) :=\int_{\rn} \int_{\rn} \phi \left(\frac{|Eu(x)-Eu(y)|}{\alpha }\right) \, \frac{dydx}{|x-y|^{2n}}  \leq 1.
\end{eqnarray}
whenever $\|u\|_{{\dot {\bf B}}^{\phi}(\Omega)}=1$ and $\alpha>M$.
  Below we assume that $\|u\|_{{\dot {\bf B}}^{\phi}(\Omega)}=1$
 Since $|\partial\Omega|=0$, one writes
\begin{eqnarray*}
H(\alpha) && = \int_{\Omega} \int_{\Omega} \phi \left(\frac{|u(x)-u(y)|}{\alpha }\right) \, \frac{dydx}{|x-y|^{2n}}+
2\int_{U} \int_{\Omega} \phi \left(\frac{|Eu(x)-u(y)|}{\alpha }\right) \, \frac{dydx}{|x-y|^{2n}}\\
&&\quad +\int_{U} \int_{U} \phi \left(\frac{|Eu(x)-Eu(y)|}{\alpha}\right) \, \frac{dydx}{|x-y|^{2n}}\\
&&  =:H_1(\alpha) + 2H_2(\alpha)+H_3(\alpha).
\end{eqnarray*}
To get \eqref{eq3.y1}, it  suffices to find constants $  M_i \ge 1$ depending only on $n$, $C_A(\Omega)$ and $\phi$ such that $H_i(\alpha) \le 1/4$   whenever $\alpha\ge  M_i $ for $ i=1,2,3$.
Indeed, by taking $M=M_1+M_2+M_3$, we have $H(\alpha)\le1$ whenever $\alpha\ge M$.

Firstly, we may let $ M_ 1=4$.  Indeed, if $\alpha> 4$ that is, $\alpha/4>1$,
by the convexity of $\phi$ and $\|u\|_{{\dot {\bf B}}^{\phi}(\Omega)}=1$, we have
$$H_1(\alpha)\le \frac14\int_{\Omega} \int_{\Omega} \phi \left(\frac{|u(x)-u(y)|}{\alpha/4 }\right) \, \frac{dydx}{|x-y|^{2n}}\le \frac14.$$

To find $ M_ 2$ and $ M_ 3$, we consider two cases: $\diam\Omega=\fz$ and $\diam\Omega<\fz$.

\medskip
\noindent
{\it Case $\diam\Omega=\fz$.}
To find $ M_ 2$, for any $x\in U $  and  $y\in\boz$,
since
 $\sum \limits _{Q \in \mathscr W } \varphi_Q (x) =1$ by Lemma 3.3,  one has
$$
Eu(x)-u(y)=\sum\limits_{Q  \in  \mathscr W   }\varphi_Q (x)[u_{Q^\ast}-u(y)],$$
 and hence,
by the convexity of $\phi$ and Jensen's inequality,
\begin{eqnarray*}
\phi\left(\frac{|Eu(x)-u(y)|}{\alpha }\right)&&
\le \phi\left(\sum\limits_{Q  \in \mathscr  W   }\varphi_Q(x) \frac{|u_{Q^\ast}-u(y)|} {\alpha }\right)\\
 && \le   \sum\limits_{Q  \in \mathscr W  } \varphi_Q(x) \phi  \left(\fint_{Q^\ast} \frac{|u(z)-u(y)|}{\alpha }\,dz\right)\\
&& \le  \sum\limits_{Q  \in \mathscr  W  }  \varphi_Q(x)  \fint _{Q^\ast}\phi\left(\frac{|u(z)-u(y)|}{\alpha }\right) \,dz.
\end{eqnarray*}
If $
\varphi_Q(x)\ne 0$, then $x\in \frac{17}{16}Q$. For $z\in Q^\ast$, by $Q^\ast\subset 10\sqrt n Q$,
we have
 $  |x-z| \leq  20  n l(Q)$.
 Since  $|x-y| \geq d(x,\, \Omega) \geq  l(Q)$,
 we have  $|x-z|\le 20  n |x-y|$, that is , $$|y-z| \leq |x-y| + |x-z| \leq 21  n |x-y|$$
So we have
\begin{eqnarray*}
\int_\boz \phi\left(\frac{|Eu(x)-u(y)|}{\alpha}\right)  \, \frac{dy}{|x-y|^{2n}}
&& \le  (21  n)^{2n}\sum\limits_{Q \in \mathscr W}\varphi_Q(x) \fint _{Q^\ast} \int_\boz\phi\left(\frac{|u(z)-u(y)|}{\alpha}\right)  \, \frac{dzdy}{|z-y|^{2n}}.
\end{eqnarray*}
Thus, by Lemma 3.2 (ii) we write
 \begin{eqnarray*}
H_2(\alpha)
&& \leq  2(21  n)^{2n} \int_U  \sum\limits_{Q \in \mathscr W}\varphi_Q(x)  \fint _{Q^\ast} \int_{\Omega}\phi\left(\frac{|u(z)-u(y)|}{\alpha}\right)  \frac{dzdy}{|y-z|^{2n}} \,dx\\
&& \leq 2\gamma_1(21  n)^{2n}   \sum\limits_{Q \in \mathscr W}\left(\frac1{|Q|}\int_U\varphi_Q(x) \,dx\right) \int _{Q^\ast} \int_{\Omega}\phi\left(\frac{|u(z)-u(y)|}{\alpha}\right)  \frac{dzdy}{|y-z|^{2n}} .
\end{eqnarray*}
Since    $\varphi_Q  \le \chi_{\frac98Q} $  as given in Lemma 3.3,    by \eqref{e2.w2}
we have
$$\frac1{|Q|}\int_U  \vz_Q(x) \,dx\le \frac1{|Q|}\int_U  \chi_{\frac98Q}(x) \,dx  \le 4^n\gamma_0,$$
which implies that
 \begin{eqnarray}\label{eq4.w2}
H_2(\alpha)
&& \leq 2\gamma_1 4^n\gamma_0(21 n)^{2n}  \sum\limits_{Q \in \mathscr W} \int _{Q^\ast}  \int_{\Omega}\phi\left(\frac{|u(z)-u(y)|}{\alpha}\right)   \frac{dzdy}{|y-z|^{2n}} .
\end{eqnarray}
By $\sum_{Q\in \mathscr W}\chi_{Q^\ast}\le \gamma_2$ as in Lemma 3.2 (iii), we obtain
 \begin{eqnarray*}
H_2(\alpha)
&& \leq 2\gamma_1 4^n\gamma_0  \gamma_2 (21  n)^{2n}  \int _\Omega  \int_{\Omega}\phi\left(\frac{|u(z)-u(y)|}{\alpha}\right)  \frac{dzdy}{|y-z|^{2n}}.
\end{eqnarray*}
Take $ M_ 2= 8\gamma_1 4^n\gamma_0  \gamma_2 (21  n)^{2n}$.
By the convexity of $\phi$ again, if $\alpha >   M_ 2  $, we have $H_2(\alpha)\le 1/4$.

To find $ M_ 3$, for each $x \in U$   set
\begin{eqnarray*}
X_1(x) := \left\{ y \in U : |x-y| \geq \frac{1}{132  n} \max\{ d(x, \Omega) , d(y, \Omega)\}\right\}\quad{\rm and}\\
X_2(x) :=U\setminus X_1(x)= \left\{ y \in U : |x-y| < \frac{1}{132  n} \max\{ d(x, \Omega) , d(y, \Omega)\}\right\}.
\end{eqnarray*}
Write
\begin{eqnarray*}
H_3(\alpha) && = \int_{U} \int_{X_1(x)} \phi \left(\frac{|Eu(x)-Eu(y)|}{\alpha }\right) \, \frac{dydx}{|x-y|^{2n}} +\int_{U} \int_{X_2(x)} \phi \left(\frac{|Eu(x)-Eu(y)|}{\alpha }\right) \, \frac{dydx}{|x-y|^{2n}}\\
&& =H_{31}(\alpha) + H_{32}(\alpha)
\end{eqnarray*}

Below, we show that there exists $ M_ {3i}\ge1$ such that if $\alpha> M_ {3i}$, then $H_{3i}\le 1/8$ for $i=1,2$.
If this is true, then letting $ M_ 3:=\max\{ M_ {31}, M_ {32}\}$,
for $\alpha> M_ 3 $ we have $H_{3}\le \frac14$ as desired.

To find $ M_ {31}$, for $x\in U$ and $y\in X_1(x)$,  since $$\sum \limits _{Q  \in \mathscr  W  } \varphi_Q (x)=\sum \limits _{P  \in \mathscr W  } \varphi_P (y)=1,$$ we have
 \begin{eqnarray*}
Eu(x)-Eu(y)&&=\sum\limits_{P \in \mathscr W  }\sum\limits_{Q \in \mathscr W  }\varphi_Q(x)\varphi_P(y)[u_{Q^\ast}-u_{P^\ast}]\\
&&= \sum\limits_{P \in \mathscr W  }\sum\limits_{Q \in \mathscr W  }\varphi_Q(x)\varphi_P(y)\fint_{Q^\ast} \fint_{P^\ast }[u(z)-u(w)]\,dzdw.
\end{eqnarray*}
Applying the convexity of $\phi$ and  Jensen's inequality, one obtains
\begin{eqnarray*}
\phi\left(\frac{|Eu(x)-Eu(y|}{\alpha }\right)
 && \le   \sum\limits_{Q  \in\mathscr W  } \sum\limits_{P \in\mathscr W  }\varphi_Q(x)\varphi_{P}(y) \phi  \left(\fint_{Q^\ast} \fint_{P^\ast }\frac{|u(z)-u(w)|}{\alpha }\,dzdw\right)\\
&& \leq \sum \limits _{Q \in\mathscr W }\sum \limits _{P \in\mathscr W }  \varphi_Q(x)\varphi_P(y)
\fint_{Q^\ast} \fint_{P^\ast} \phi \left(\frac{|u(z)- u(w)|}{\alpha }\right)\, dwdz.
\end{eqnarray*}

For $x\in Q$ and $z\in Q^\ast$, by $Q^\ast\subset 10 \sqrt n Q$, we have $|x-z|\le 10  n  l_Q\le 10  n d(x,\Omega)$. Similarly, for $y\in  P$,
 and $ w \in P^\ast$, we have $|y-w|\le 10  n  d(y,\Omega)$. If $y\in X(x)$, that is,
 $132  n|x-y| \geq  \max\{d(x, \Omega), d(y, \Omega)\}$, we further have
 $$ |z-w|\le |x-z|+ |x-y| + |y-w| \le   2641n |x-y|.$$
Thus
\begin{eqnarray*}
H_{31}(\alpha)&& \le (2641 n )^{2n}
\int_{U} \int_{X_1(x)} \sum \limits _{Q \in \mathscr W }\sum \limits _{P \in\mathscr W }  \varphi_Q(x)\varphi_P(y)
\fint_{Q^\ast} \fint_{P^\ast} \phi \left(\frac{|u(z)- u(w)|}{\alpha }\right)\, \frac{dwdz} {|z-w|^{2n}} \, {dydx}
\end{eqnarray*}
By  $|Q|\le \gamma_1|Q^\ast|$ and $|P|\le \gamma_1|P^\ast|$ as given in Lemma 2.2 (ii), we   have
  \begin{eqnarray*}
H_{31}(\alpha)
&&\le (2641n )^{2n}\gz_1^{2}\sum\limits _{Q \in \mathscr W }\sum \limits _{P \in\mathscr W }
\left(\frac1{|Q|}\int_{U}\varphi_Q(x)\,dx \frac1{|P|}\int_{U} \varphi_P(y)\, dy\right)
\int_{Q^\ast} \int_{P^\ast} \phi \left(\frac{|u(z)- u(w)|}{\alpha }\right)\, \frac{dwdz} {|z-w|^{2n}}.
\end{eqnarray*}
By Lemma 3.3 and \eqref{e2.w2} we have
$$\frac1{|Q|}\int_{U}\varphi_Q(x)\,dx \frac1{|P|}\int_{U} \varphi_P(y)\, dy\le( 4^n\gamma_0)^2.$$
 Thus
\begin{eqnarray*}
H_{31}(\alpha)
&&\le (2641 n )^{2n}\gz_1^{2}  (4^n\gamma_0)^2 \sum\limits _{Q \in \mathscr W }\sum \limits _{P \in\mathscr W }
\int_{Q^\ast} \int_{P^\ast} \phi \left(\frac{|u(z)- u(w)|}{\alpha }\right)\, \frac{dwdz} {|z-w|^{2n}}.
\end{eqnarray*}
Observing $\sum\limits _{Q \in \mathscr W }\chi_{Q^\ast}\le \gz_2$ as in Lemma 3.2 (iii), we  arrive at
\begin{eqnarray*}H_{31}(\alpha)
&&\le (2641n )^{2n}\gz_1^{2} \gz_2^2 (4^n\gamma_0)^2
\int_{U} \int_{U} \phi \left(\frac{|u(z)- u(w)|}{\alpha }\right)\, \frac{dwdz} {|z-w|^{2n}}.
\end{eqnarray*}
Letting $ M_ {31}=8   (2641 n )^{2n}\gz_1^{2} \gz_2^2 (4^n\gamma_0)^2$.
If $\alpha>  M_ {31} $, by the convexity of $\phi$ again
we have  $H_{31}(\alpha)\le 1/8$.

To find $ M_ {32}$, write
\begin{eqnarray*}
H_{32}(\alpha) && =  \int_{U}  \sum_{P\in\mathscr W}\int_{P\cap X_2(x)} \phi \left(\frac{|Eu(x)-Eu(y)|}{\alpha }\right) \, \frac{dy}{|x-y|^{2n}} dx.
\end{eqnarray*}

Let $x\in U$ and $y\in X_2(x) \cap P$ for some $P\in \mathscr W$.
Since  $$\sum \limits _{Q  \in \mathscr W }\left[ \varphi_Q (x) - \varphi_Q (y)\right]= 0,$$ we write
  $$ Eu(x)-Eu(y)=\sum \limits _{Q  \in \mathscr W }\left[ \varphi_Q (x) - \varphi_Q (y)\right] u_{Q^\ast} = \sum \limits _{Q  \in \mathscr W }\left[ \varphi_Q (x) - \varphi_Q (y)\right][u_{Q^\ast}-u_{P^\ast }].$$
Note that by  Lemma 3.3, $$|\nabla \vz_Q|\le \frac {L}{l_Q}\chi_{\frac{17}{16}Q} .$$
One gets
   $$ |Eu(x)-Eu(y)|\le L\sum \limits _{Q  \in\mathscr W } \frac{|x-y|}{l_Q}\left[\chi_{\frac{17}{16}Q}(x)+\chi_{\frac{17}{16}Q}(y)\right]|u_{Q^\ast}-u_{P^\ast }|.$$
Moreover, we have
\begin{equation}\label{eq4.x1} |Eu(x)-Eu(y)|\le 2L\sum \limits _{Q  \in N(P) } \frac{|x-y|}{l_Q} \chi_{\frac{9}{8}Q}(x) |u_{Q^\ast}-u_{P^\ast }|.
\end{equation}
Indeed, 
since $y\in X_2(x)$,  that is, $|x-y|\le \frac1{132  n} \max\{d(x,\Omega),d(y,\Omega)\} $,
taking $\bar{y} \in \bar{\Omega}$ with $|y-\bar{y}|=d(y,\Omega)$   we have
\begin{eqnarray*}
d(x,\Omega)\leq |x-\bar{y}| \leq |x-y|+|y-\bar{y}|
 \leq \frac{1}{132  n }d(x,\Omega)+ \frac{1+132  n}{132  n}d(y,\Omega),\nonumber
\end{eqnarray*}
which implies $$d(x,\Omega)\leq \frac{132  n+1}{132  n-1}d(y,\Omega),$$
Similarly,  we have $$d(y,\Omega)\leq \frac{132  n+1}{132 n-1}d(x,\Omega).$$ 
Thus, $$|x-y|\le \frac1{132  n}\frac{132  n+1}{132  n-1}d(x,\Omega).$$

If $y\in  \frac{17}{16} Q$, by $y\in P$ we have $Q\in N(P)$, and hence
$$   d(y,\Omega)\le d(y,Q)+\max_{z\in Q}d(z,\Omega) \leq \frac{1}{16} \sqrt n l_{  Q} + 4 \sqrt nl_{  Q} \le  \frac{65}{16}  \sqrt nl_{  Q},$$
Thus $$|x-y|\le \frac1{132  n}\frac{132  n+1}{132  n-1}\times \frac{65}{16}  \sqrt nl_{  Q}\le \frac{1}{32\sqrt n} l_{  Q},$$
which implies that $x \in \frac{9}{8}Q$. 
Moreover, if $x\in \frac{17}{16}Q$, similarly we have $y\in \frac{9}8 Q$, and hence $Q\in N(P)$.
We   conclude that
$$
  \chi_{\frac{17}{16}Q}(x)+\chi_{\frac{17}{16}Q}(y)=0 $$
when $Q\notin N(P)$, and
 $$
  \chi_{\frac{17}{16}Q}(x)+\chi_{\frac{17}{16}Q}(y) \le 2 \chi_{\frac{9}{8}Q}(x)$$
  when $Q\in N(P)$,
 This gives  \eqref{eq4.x1}.

Note that by Lemma 3.1(iii), $\sum \limits _{Q  \in \mathscr W }  \chi_{\frac{17}{16}Q}(x) \le \gamma_0$.
From  the convexity of $\phi$ and \eqref{eq4.x1}  it follows that
\begin{eqnarray*}
\phi \left(\frac{|Eu(x)-Eu(y)|}{\alpha }\right)&&\le  \phi\left (\sum \limits _{Q  \in N(P) } \frac{|x-y|}{l_Q} 2\chi_{\frac{9}{8}Q}(x) \frac{|u_{Q^\ast}-u_{P^\ast }|}{ \alpha/L}\right)\\
&& \le \frac1{  \gamma_0}\sum \limits _{Q  \in N(P) }  \chi_{\frac{9}{8}Q}(x) \phi\left(\frac{|x-y|}{l_Q}\frac{|u_{Q^\ast}-u_{P^\ast }|}{\alpha/ 2L \gamma_0}\right).
\end{eqnarray*}

Therefore,  we obtain
  \begin{eqnarray*}
H_{32}(\alpha) &&\le   \frac1{  \gamma_0}\int_{U} \sum_{P\in \mathscr W}\int_{P\cap X_2(x)}  \sum \limits _{Q  \in N(P) }  \chi_{\frac{9}{8}Q}(x) \phi\left(\frac{|x-y|}{l_Q}\frac{|u_{Q^\ast}-u_{P^\ast }|}{\alpha/ 2L \gamma_0}\right)\, \frac{dydx}{|x-y|^{2n}}\\
&&=   \frac1{  \gamma_0}\int_{U}\sum_{P\in\mathscr W} \sum \limits _{Q  \in N(P)}  \chi_{\frac{9}{8}Q}(x)   \int_{P\cap X_2(x)}\phi\left(\frac{|x-y|}{l_Q}\frac{|u_{Q^\ast}-u_{P^\ast }|}{\alpha/ 2L \gamma_0}\right)\, \frac{dy}{|x-y|^{2n}}\,dx
\end{eqnarray*}

Observe that for $ x \in \frac98Q$ and $y\in P\cap X_2(x)$,  by $d(x,\Omega)\le 4\sqrt n l_Q$ we have
$$|x-y|\le \frac1{132  n}\frac{132  n+1}{132  n-1}d(x,\Omega)\le l_Q .$$
By the assumption  \eqref{delta0} for  $\phi$, we have
 \begin{eqnarray*}
\int_{P\cap X_2(x)}\phi\left(\frac{|x-y|}{l_Q}\frac{|u_{Q^\ast}-u_{P^\ast }|}{\alpha/ 2L \gamma_0}\right)\, \frac{dy}{|x-y|^{2n}}
&&\le n\omega_n\int_0^{  l_Q} \phi\left(\frac{t}{l_Q}\frac{|u_{Q^\ast}-u_{P^\ast }|}{\alpha/ 2L \gamma_0}\right) \frac{dt}{t^{n+1}}\\
&&\le n\omega_n(  l_Q)^{-n } \left(\frac{|u_{Q^\ast}-u_{P^\ast }|}{ \alpha/ 2L \gamma_0}\right)^n\int_0^{ \frac{|u_{Q^\ast}-u_{P^\ast }|}{\alpha/ 2L \gamma_0}} \phi\left(s\right) \frac{ds}{s^{n+1}}\\
&&\le nC_\phi\omega_n(  l_Q)^{-n }  \phi\left( \frac{|u_{Q^\ast}-u_{P^\ast }|}{ \alpha/ 2 L  \gamma_0}\right).
\end{eqnarray*}
Using  the above inequality and \eqref{e2.w2}, one has
  \begin{eqnarray*}
H_{32}(\alpha)
 &&\le   nC_\phi  \frac1{  \gamma_0}  \omega_n\int_{U} \sum_{P\in \mathscr W}\sum \limits _{Q  \in N(P)}( l_Q)^{-n }  \chi_{\frac{9}{8}Q}(x)  \phi\left( \frac{|u_{Q^\ast}-u_{P^\ast }|}{4\alpha/ 2L\gamma_0}\right)
 \,dx\\
 &&\le  n C_\phi  \frac1{  \gamma_0} \omega_n  \sum_{P\in \mathscr W} \sum \limits _{Q  \in N(P) }\left(\frac1{|Q|}\int_U \chi_{\frac{9}{8}Q}(x) \,dx\right) \phi\left( \frac{|u_{Q^\ast}-u_{P^\ast }|}{ \alpha/ 2L  \gamma_0}\right)\\
 &&\le   C_\phi n \omega_n 4^{n}  \sum_{P\in \mathscr W}\sum \limits _{Q  \in N(P) }    \phi\left( \frac{|u_{Q^\ast}-u_{P^\ast }|}{ \alpha/ 2L \gamma_0}\right).
\end{eqnarray*}

For each  $P\in \mathscr W$ and $Q  \in N(P)$, by Jessen's inequality
$$
\phi\left( \frac{|u_{Q^\ast}-u_{P^\ast }|}{ \alpha/ 2 L  \gamma_0}\right) \le \fint_{Q^\ast}\fint_{P^\ast}\phi\left (\frac{|u(z)-u(w)|}{ \alpha/2 L \gamma_0 }\right)\,dz\,dw $$
Note that by Lemma 3.2 (i),    $ P^\ast\subset 10\sqrt n P$ and $ Q^\ast\subset 10\sqrt n Q$.
Thus for any $z\in P^\ast $ and $w\in Q^\ast $, by   $Q  \in N(P)$, we have
$$|z-w|\le 10\sqrt n(l_Q+l_P)\le 50  n \min\{l_Q, l_P\}.$$
Since $|Q|\le \gamma_1|Q^\ast|$ and   $|P|\le \gamma_1|P^\ast|$ as given in Lemma 3.2 (ii),  one gets
$$|z-w|^{2n}\le   (50 n)^{2n} (\gamma_1)^2 |Q^\ast|  |P^\ast| .$$
Therefore,
$$
\phi\left( \frac{|u_{Q^\ast}-u_{P^\ast }|}{ \alpha/ 2 L  \gamma_0}\right) \le    (50  n)^{2n} (\gamma_1)^2 \int_{Q^\ast}\int_{P^\ast}
 \phi\left( \frac{|u(z)-u(w)|}{ \alpha/2  L \gamma_0}\right)\,\frac{dz\,dw}{|z-w|^{2n}}$$
and hence
  \begin{eqnarray*}
H_{32}(\alpha)
  &&\le   C_\phi n \omega_n 4^{2n}  (50  n)^{2n} (\gamma_{1})^{2} \sum_{P\in \mathscr W}\sum \limits _{Q  \in N(P) }    \int_{Q^\ast}\int_{P^\ast}
 \phi\left( \frac{|u(z)-u(w)|}{ \alpha/2  L \gamma_0}\right)\,\frac{dz\,dw}{|z-w|^{2n}}
\end{eqnarray*}
With $\sum_{Q\in \mathscr W}\chi_{Q^\ast}\le \gamma_2$ as given in Lemma 2.2 (iii), we  obtain
   \begin{eqnarray*}
H_{32}(\alpha)
 &&\le  C_\phi n \omega_n 4^{2n} (50  n)^{2n} (\gamma_1)^2(\gamma_{2})^2  \int_{\Omega}\int_{\Omega}
 \phi\left( \frac{|u(z)-u(w)|}{ \alpha/ 2L \gamma_0}\right)\,\frac{dz\,dw}{|z-w|^{2n}}.
\end{eqnarray*}
Letting   $ M_ {32}=8L \gamma_0C_\phi n \omega_n 4^{2n} (50  n)^{2n} (\gamma_1)^2(\gamma_{2})^2  $.
If $\alpha>   M_ {32} $, we have $H_{32}(\alpha)\le 1/8$ as desired.

\medskip
\noindent
{\it Case $\diam\Omega<\fz$}.

To find $ M_ 2$,   write
 \begin{eqnarray*}
H_2(\alpha)
&&= \int_{V^{(2)}} \int_{\Omega} \phi \left(\frac{|Eu(x)-u(y)|}{\alpha}\right) \, \frac{dydx}{|x-y|^{2n}}+ \int_{U \backslash V^{(2)}} \int_{\Omega} \phi \left(\frac{| u(x)-u(y)|}{\alpha}\right) \, \frac{dydx}{|x-y|^{2n}} =H_{21}(\alpha) + H_{22}(\alpha).
\end{eqnarray*}
Recall that $V^{(2)}$ is defined by \eqref{vk.1}
 in Section 3.    It suffices to find $ M_ {2i}$ such that $H_{2i}\le 1/8$ for $i=1,2$.

Regards of $H_{22}(\alpha)$, observe that for any  $Q\in \mathscr W\setminus \mathscr W_{\ez_0}^{(2)}$,
  we  have $N(Q)\cap \mathscr W_{\ez_0}=\emptyset$, and hence  $P^\ast=\Omega$ for all $P\in N(Q)$.
Thus, for any $x\in U\setminus V^{(2)}$,   by Lemma 3.3 and Lemma 3.1  we have
$$Eu(x)=\sum_{P\in \mathscr W}\vz_P(x)u_{P^\ast}=\sum_{P\in N(Q)}\vz_P(x)u_{P^\ast}=u_\Omega.$$
Thus
 $$H_{22}(\alpha) =\int_{U \backslash V^{(2)}} \int_{\Omega} \phi \left(\frac{| u_\Omega-u(y)|}{\alpha}\right) \, \frac{dydx}{|x-y|^{2n}}.$$
By Jensen's inequality, one gets
 \begin{eqnarray*}
 H_{22}(\alpha)
 &&\leq  \int_{U \backslash V^{(2)}} \int_{\Omega}   \fint _{\Omega} \phi\left(\frac{|u(z)-u(y)|}{\alpha}\right) \,dz \frac{dydx}{|x-y|^{2n}}\\
  &&=  \int_{\Omega} \int_{U \backslash V^{(2)}}\frac{dx}{|x-y|^{2n}}   \fint _{\Omega} \phi\left(\frac{|u(z)-u(y)|}{\alpha}\right) \,dz dy \\
    &&=  \int_{\Omega} \left[\frac{|\diam \Omega|^{2n}}{|\Omega|}\int_{U \backslash V^{(2)}}\frac{dx}{|x-y|^{2n}} \right]  \int _{\Omega} \phi\left(\frac{|u(z)-u(y)|}{\alpha}\right) \frac{dz}{|z-y|^{2n}}.
   \end{eqnarray*}
For any $x\in U\setminus V^{(2)} $ and $y \in \Omega$, since there exists $Q\in \mathscr W\setminus \mathscr W_{\ez_0} $ so that $x\in Q$,
 one always has $$|x-y|\ge d(x,\Omega) \ge l_Q \ge \frac1{\ez_0}\diam\Omega.$$
Moreover, by the Ahlfors $n$-regular assumption,  it holds that  $ |\Omega|\ge C_A(\Omega)|\diam\Omega|^2$.
Thus,
 \begin{eqnarray*}
\frac{|\diam \Omega|^{2n}}{|\Omega|}\int_{U \backslash V^{(2)}}\frac{dx}{|x-y|^{2n}}
 &&\leq\frac1{ C_A(\Omega) } |\diam \Omega|^{n} \int_{|x-y|> \frac{1}{\epsilon_0}  \diam \Omega}\frac{dx}{|x-y|^{2n}} \\
 &&\leq |\diam \Omega|^{n} n\omega_n  \int^{\infty}_{\frac{1}{\epsilon_0} \diam \Omega} \frac{1}{r^{n+1}} dr \\
 && \leq  \omega_n \frac1 {C_A(\Omega)} \epsilon_0^{n},
 \end{eqnarray*}
from which, we conclude that
\begin{eqnarray*}
H_{22}(\alpha)&& \leq \omega_n \frac1 {C_A(\Omega)}   \epsilon_{0}^{n}\int_{\Omega} \int_{\Omega}  \phi\left(\frac{|u(z)-u(y)|}{\alpha}\right) \frac{dzdy}{|y-z|^{2n}}.
\end{eqnarray*}
Letting  $ M_ {22}=8 \omega_n \frac1 C_A(\Omega) \epsilon_{0}^{n}$,
 by the convexity of $\phi$ again,  for $\alpha >   M_ {22} $ we have $H_{22}(\alpha)\le 1/8$.

Regards of $H_{21}(\alpha)$,
observe that  $$\sum \limits _{Q \in \mathscr W } \varphi_Q (x)= \sum \limits _{Q \in \mathscr W_{\ez_0}^{(3)} } \varphi_Q (x) =1 $$
whenever  $x\in V^{(2)}$.
With aid of this and following, line by line, the
  argument to get \eqref{eq4.w2} for $H_2(\alpha)$  in the case $\diam\Omega=\fz$, one has
 \begin{eqnarray*}
H_{21}(\alpha)
&& \leq 2\gamma_1 4^n\gamma_0(21 n)^{2n}  \sum\limits_{Q \in \mathscr  W^{(2)}_{\ez_0}} \int _{Q^\ast}  \int_{\Omega}\phi\left(\frac{|u(z)-u(y)|}{\alpha}\right)   \frac{dzdy}{|y-z|^{2n}} .
\end{eqnarray*}
Here we omit the details. Since
 $$ \sum\limits_{Q \in \mathscr W^{(2)}_{\ez_0}}\chi_{Q^\ast}   \le \gamma_2+(\ez_0+128\sqrt{n})^n,$$
we have
 \begin{eqnarray*}
H_{21}(\alpha)
&& \leq 2\gamma_1 4^n\gamma_0  [\gamma_2+(\ez_0+ 64\sqrt{n})^n] (21 n)^{2n}  \int _\Omega  \int_{\Omega}\phi\left(\frac{|u(z)-u(y)|}{\alpha}\right)  \frac{dzdy}{|y-z|^{2n}}.
\end{eqnarray*}
Set $  M_ {21} =16\gamma_1 4^n\gamma_0  [\gamma_2+(\ez_0+ 64\sqrt{n})^n] (21 n)^{2n}$.
By the convexity of $\phi$ again, if $\alpha >   M_ {21} $, we have $H_{21}(\alpha)\le 1/8$ as dsired.

 To find $ M_ 3$, notice that
\begin{eqnarray*}U\times U
&\subset [V^{(3)}\times V^{(3)}]\cup  [V^{(2)}\times ( U \backslash  V^{(3)})] \cup  [ ( U \backslash  V^{(3)})\times V^{(2)} ] \cup [(U \backslash  V^{(2)} )\times (U \backslash  V^{(2)})] .
\end{eqnarray*}
We write
\begin{eqnarray*}
H_{3}&&=  \int_{V^{(3)}} \int_{ { V ^{(3)}}}   \phi \left(\frac{|Eu(x)-Eu(y)|}{\alpha}\right) \, \frac{dydx}{|x-y|^{2n}} +2  \int_{V^{(2)} } \int_{ U \backslash  V^{(3)}  }   \phi \left(\frac{|Eu(x)-Eu(y)|}{\alpha}\right) \, \frac{dydx}{|x-y|^{2n}}\\
&&\quad
+  \int_{U\setminus V^{(2)}} \int_{U\setminus V^{(2)}}   \phi \left(\frac{|Eu(x)-Eu(y)|}{\alpha}\right) \, \frac{dydx}{|x-y|^{2n}}\\
&&=: H_{31}(\alpha) +2H_{32}(\alpha)+H_{33}(\alpha).
\end{eqnarray*}

Since $Eu(x)=Eu(y)=u_\Omega$ for $x,y\in U\setminus V^{(2)}$, we have $H_{33}(\alpha)=0$.
It suffices to find $M_{3i}$   such that   $H_{3i}(\alpha)$ for all $\alpha>M_{3i}$ and $i=1,2$.

Regard of $H_{31}(\alpha)$, similarly to $H_{3}(\alpha)$ in the case $\diam\Omega=\fz$
 and taking $  M_ {31}$ as  $ M_ 3$ there with $\gamma_2$ replaced by $\gamma_2+(\ez_0+4^{5}\sqrt{n})^n$,
we can show that  if $\alpha\ge  M_ {31}$, then $H_{31}(\alpha)\le 1/8$. Here we omit the details.

For $H_{32}(\alpha)$,  note that for $y\in U \backslash   V^{(2)}$, we have $Eu(y)=u_\Omega$. Thus
 $$H_{32}(\alpha)= 2 \int_{V^{(2)}} \int_{ U \backslash   V^{(3)}  }   \phi \left(\frac{|Eu(x)-u_\Omega|}{\alpha}\right) \, \frac{dydx}{|x-y|^{2n}}.$$
 By Jessen's inequality, one has
 $$H_{32}(\alpha)\le   \int_{V^{(2)}} \int_{ U \backslash   V^{(3)}  } \, \frac{dy }{|x-y|^{2n}}\fint_\Omega \phi \left(\frac{|Eu(x)-u(z)}{\alpha}\right) \,   dx\,dz  .$$
For any $x\in V^{(2)}$ and $y\in U \backslash   V^{(3)}$  note that
 $|x-y|\ge l(Q)\ge \frac1 {\ez_0}\diam \Omega$,
 where $Q\in \mathscr W_{\ez_0}^{(3)}\setminus \mathscr W_{\ez_0}^{(2)}$ and $y\in Q$.
 Thus
 $$\int_{ U \backslash   V^{(3)}  } \, \frac{dy }{|x-y|^{2n}}\le \ez_0^n (\diam \Omega)^{-n}.$$
Since $|\Omega|\ge C_A(\Omega)\diam\Omega$,  one   has
  $$H_{32}(\alpha)\le  \frac1{C_A(\Omega)}\ez_0^{ n} (\diam \Omega)^{-2n}    \int_{V^{(2)}}  \int_\Omega \phi \left(\frac{|Eu(x)-u(z)|}{\alpha}\right) \,   dx\,dz .$$
  Note that
  for any $x\in V^{(2)}$  there exists a $P_i\in \mathscr W^{(i)}_{\ez_0}$ such that
  $x\in P_2$ and $P_i\in N(P_{i-1})$ for $i=1,2$.
  Since $l(P_0)\le \frac1{\ez_0}\diam\Omega$, by Lemma 3.1 we know that
  $l(P_2)\le 4^2 \frac1{\ez_0}\diam\Omega$.
 Thus for $y\in \Omega$, one has
  $$|x-y|\le \dist(x,\Omega)+\diam\Omega\le  \diam P_2+\dist(P_2,\Omega)+ \diam\Omega
  \le  4^4 \frac1{\ez_0}\sqrt n \diam\Omega.$$
  Therefore,
   $$H_{32}(\alpha)\le  \frac1{C_A(\Omega)}\ez_0^{ n}  (4^4 \frac1{\ez_0}\sqrt n)^{2n}    \int_{V^{(2)}}  \int_\Omega \phi \left(\frac{|Eu(x)-u(z)|}{\alpha}\right) \,   \frac{dx\,dz}{|x-z|^{2n}}\le
     \frac1{C_A(\Omega)}    4^ {8n}  \ez_0^{-n}  n ^{ n} H_{21}(\alpha).$$
If $\alpha>M_{32}=  \frac8{C_A(\Omega)} 4^ {8n}  \ez_0^{-n}  n ^{ n}  M_ {21}$, we have $H_{32}(\alpha)\le 1/8$.
This completes the proof of Theorem 1.1 (i).

\begin{rem}\rm
We emphasis that the bounded overlaps of reflecting cubes $Q^\ast$  in Lemma 3.2 (iii) play central roles  in the proof of the boundedness of extension operator $E: \dot{\bf B}^{\phi} (\Omega)$ to $\dot {\bf B}^\phi(\rn)$.

Similarly to \cite{z14,hkt08,k98} and the reference therein,
one may define the extension operator $\wz E u$ similarly to  $Eu$ but replacing $Q^\ast$ in $Eu$ with
$Q(x_Q^\ast,l_Q)\cap\Omega$, where  $x^\ast_Q$ is the nearest point in $\Omega$ of $Q   \in\mathscr W $.
Note that $  \{Q(x_Q^\ast,l_Q)\cap\Omega, Q   \in\mathscr W \}$ does not have bounded overlap property as in Lemma 3.2(iii) in general.

 In the case $\phi(t)=t^p$ with $p>n$, similarly to \cite{z14}, one may prove that $\wz E  $ is bounded from $\dot {\bf B}^{n/p}_{pp}(\Omega)$ to $\dot {\bf B}^{n/p}_{pp}(\rn)$.  The point is prove that
$$\frac{|\wz Eu(x)-\wz Eu(y)|^p}{|x-y|^{2n}} \le \mathcal M\left(\frac{|u(z)-u(w)|^p}{|z-w|^{2n}} \chi_{\Omega\times\Omega}\right)(x,y)$$  where $\mathcal M$ is certain Hardy-Littlewood maixmal operator.     See page 968 in the proof of \cite[Theorem 1.1]{z14}.

For general $\phi$ in Theorem 1.1, some appropriate  estimates of
of $\phi(\frac{\wz Eu(x)-\wz Eu(y)}\alpha) \frac1{|x-y|^{2n}} $ via certain maximal functions are not available for us. We do not know if it is possible to obtain the boundedness of $\wz E$ from $ {\bf B}^{\phi} (\Omega)$ to $\dot {\bf B}^\phi(\rn)$. Note the our proof of the boundedness of $E$ does not work for $\wz E$ since $  \{Q(x_Q^\ast,l_Q)\cap\Omega, Q   \in\mathscr W \}$  does not have the bounded overlap property.

\end{rem}

\section{Proof of Theorem \ref{t1.1} (ii)}

We divide the proof into 3 steps.

\noindent {\it  Step 1.}
Since $\Omega$ is a ${\dot {\bf B}}^\phi$-extension domain,
there exists a bounded linear extension operator $E: {\dot {\bf B}}^\phi(\Omega)\to {\dot {\bf B}}^\phi(\rn)$.
For any $u\in {\dot {\bf B}}^\phi(\Omega)$,   we  have  $Eu  \in {\dot {\bf B}}^\phi(\rn)$ with
 $E u=u$ in $\Omega$, $\|E u \|_{{\dot {\bf B}}^\phi(\rn)  }\le   \|E\|\|u\|_{{\dot {\bf B}}^\phi(\Omega)}$.
 By Lemma 2.3, we have $\wz u\in BMO(\rn)$ and
 $\|E u \|_{BMO(\rn) }\le \phi^{-1}({|S^1|^2})\|E u\|_{{\dot {\bf B}}^\phi(\rn)}$.
From the John-Nirenberg inequality, it follows that
 $$
 \int_{B} \exp \left(\frac{|E u-(E u)_{B}|}{C_{JN}(n)\|E u\|_{BMO(\rn)}}\right)\,dx \le   C(n)|B|\quad \mbox{for all balls  $B \subset \rn$ }
 $$
Thus,
\begin{equation}\label{IM}
 \inf_{c\in\rr}\int_{B\cap \Omega} \exp \left(\frac{| u-c|}{C(\phi,n,\Omega)\|u\|_{{\dot {\bf B}}^\phi(\Omega)}}\right)\,dx \le   C(n)|B|\quad \mbox{for all balls  $B \subset \rn$.}
\end{equation}

\noindent {\it Step 2.}
For $x\in \Omega$ and $0<r<t<\diam\Omega$, set  the function
$$u_{x,r,t}(z)=\left\{\begin{array}{ll}1& z\in B(x,r) \cap \Omega\\
\frac{t-|x-z|}{t-r}\quad & z\in \left(B(x,t)\setminus B(x,r)\cap\right) \Omega\\
0& z\in\Omega\setminus B(x,t)
\end{array}\r.$$
We have the following.
\begin{lem}\label{l2.3}
Suppose that $\phi$ is a Young  function satisfying \eqref{delta0}.
For $x\in \Omega$ and $0<r<t<\diam\Omega$,  we have $u_{x,r,t}\in {\dot {\bf B}}^\phi(\Omega)$ with
$$\|u_{x,r,t}\|_{{\dot {\bf B}}^\phi(\Omega)}\le 8\omega_n[C_\phi 4^n+1] \left [\phi^{-1}\left(\frac{(t-r)^n}{| B(x,t) \cap \Omega| }\right)\right]^{-1}.$$
\end{lem}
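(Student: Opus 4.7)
The plan is to exploit that $u:=u_{x,r,t}$ is a global Lipschitz cutoff. Indeed, on $\Omega$ the formula for $u$ agrees with
$$v(z) := \max\!\bigl\{0,\, \min\{1,\, (t - |x - z|)/(t-r)\}\bigr\}, \quad z \in \rn,$$
which is the composition of $1$-Lipschitz truncations with the $1/(t-r)$-Lipschitz function $z \mapsto (t-|x-z|)/(t-r)$; hence $v$ is $1/(t-r)$-Lipschitz on $\rn$, bounded by $1$, and supported in $B(x,t)$. For all $z, w \in \Omega$ this yields the two-sided estimate
$$|u(z) - u(w)| \le \min\bigl\{1,\, |z-w|/(t-r)\bigr\},$$
and $u(z) - u(w)$ vanishes unless $z \in A$ or $w \in A$, where $A := B(x,t) \cap \Omega$.

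Setting $I(\alpha) := \iint_{\Omega \times \Omega} \phi(|u(z)-u(w)|/\alpha)\,|z-w|^{-2n}\,dz\,dw$, symmetry allows reducing to $z \in A$ (up to a factor of $2$), and I would split the $w$-integration at $|z - w| = t - r$. Polar coordinates on the close piece give
$$\omega_n \int_0^{t-r} \phi\!\left(\tfrac{s}{(t-r)\alpha}\right) \frac{ds}{s^{n+1}} = \omega_n \bigl((t-r)\alpha\bigr)^{-n} \int_0^{1/\alpha} \phi(v)\, \frac{dv}{v^{n+1}} \le \omega_n C_\phi (t-r)^{-n} \phi(1/\alpha),$$
the last inequality being exactly hypothesis \eqref{delta0} applied at $t = 1/\alpha$. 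The far piece is controlled by $|u(z) - u(w)| \le 1$ together with the elementary identity $\int_{|z-w|>t-r}|z-w|^{-2n}\,dw = \omega_n/(n(t-r)^n)$. Summing and integrating over $z \in A$ produces
$$I(\alpha) \;\le\; 2\omega_n\!\left(C_\phi + \tfrac{1}{n}\right) \frac{|A|}{(t-r)^n}\, \phi(1/\alpha).$$

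To obtain the stated norm bound, I would then invert the above. Demanding $I(\alpha) \le 1$ reduces to $\phi(1/\alpha) \le (t-r)^n/\bigl(2\omega_n(C_\phi + 1/n) |A|\bigr)$. The convexity of $\phi$ with $\phi(0) = 0$ yields $\phi(\lambda s) \le \lambda \phi(s)$ for $\lambda \in (0,1]$, and consequently $\phi^{-1}(\lambda y) \ge \lambda \phi^{-1}(y)$. Applying this with $\lambda = 1/(2\omega_n(C_\phi + 1/n))$, which lies in $(0,1)$ since $\omega_n > 1$ for $n \ge 2$, and $y = (t-r)^n/|A|$, converts the requirement into
$$\alpha \;\ge\; \frac{2\omega_n(C_\phi + 1/n)}{\phi^{-1}((t-r)^n/|A|)},$$
which is comfortably dominated by the claimed $8\omega_n(C_\phi 4^n + 1)/\phi^{-1}((t-r)^n/|A|)$. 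No serious obstacle is anticipated: the heart of the estimate is that \eqref{delta0} collapses the dyadic shell integral into $\phi$ evaluated at a single endpoint, and the only mildly delicate bookkeeping is the rescaling via $\phi^{-1}$ (which the $(0,1)$-check on $\lambda$ justifies).
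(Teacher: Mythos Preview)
Your argument is correct and in fact a bit cleaner than the paper's. Both proofs rest on the same three ingredients --- the $1/(t-r)$-Lipschitz bound, the trivial bound $|u|\le 1$, and condition~\eqref{delta0} --- but the paper splits the double integral according to whether $z,w$ both lie in $B(x,t)\cap\Omega$ (its $H_1$) or only one does (its $H_2$), and inside $H_2$ it further distinguishes whether $z$ lies in the annulus $B(x,t)\setminus B(x,r)$ or in the core $B(x,r)$, exploiting the variable lower bound $|z-w|\ge t-|z-x|$. This forces the paper to control $\sup_{s\in(0,1]}\phi(s/\alpha)s^{-n}$ via a dyadic argument and \eqref{delta0}, picking up the factor $4^n$. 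Your single split at the fixed scale $|z-w|=t-r$, applied uniformly for $z\in A$ and $w\in\rn$, avoids all of that and yields the sharper constant $2\omega_n(C_\phi+1/n)$ in place of $8\omega_n(C_\phi 4^n+1)$.

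One small correction: the assertion ``$\omega_n>1$ for $n\ge 2$'' is false for large $n$ (indeed $\omega_n\to 0$). This does not damage the argument: if $\lambda=1/(2\omega_n(C_\phi+1/n))>1$ then the requirement $\phi(1/\alpha)\le \lambda\cdot(t-r)^n/|A|$ is weaker than $\phi(1/\alpha)\le (t-r)^n/|A|$, so $\alpha=[\phi^{-1}((t-r)^n/|A|)]^{-1}$ already suffices. Either way you obtain $\|u_{x,r,t}\|_{\dot{\bf B}^\phi(\Omega)}\le \max\{1,\,2\omega_n(C_\phi+1/n)\}\,[\phi^{-1}((t-r)^n/|A|)]^{-1}$, which is all that is used downstream in Step~3.
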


\begin{proof}[Proof of Lemma \ref{l2.3}]
Write
\begin{align*}
 & \int_\Omega\int_\Omega\phi\left(\frac{|u_{x,r,t}(z)-u_{x,r,t}(w)|}\alpha\right)\frac{dzdw}{|z-w|^{2n}}\\
 &\quad=\int_{B(x,t)\cap \Omega}\int_{B(x,t)\cap \Omega}\phi\left(\frac{|u_{x,r,t}(z)-u_{x,r,t}(w)|}\alpha\right)\frac{dzdw}{|z-w|^{2n}}+\int_{\Omega\setminus B(x,t)}\int_{B(x,t)\cap \Omega}\phi\left(\frac{|  u_{x,r,t}(z)|}\alpha\right)\frac{dzdw}{|z-w|^{2n}}\\
&\quad=:H_1(\alpha)+H_2(\alpha).
\end{align*}
If suffices to find  a constant $M$ depending only on $n$ such that for
$\alpha= M[\phi^{-1}\left(\frac{(t-r)^n}{|B(x,t)\cap \Omega| }\right)]^{-1}$, we have $H_1\le \frac12$ and
$H_2(\alpha)\le \frac12 $.

Write
\begin{align*}H_1(\alpha)&\le \int_{B(x,t)\cap \Omega}\int_{B(w,t-r)\cap \Omega}\phi\left(\frac{|z-w|}{\alpha(t-r)}\right)\frac{dz}{|z-w|^{2n}}dw+\int_{B(x,t)\cap \Omega}\int_{\left(B(z,2t)\setminus B(w,t-r)\right)\cap \Omega}\phi\left(\frac{ 1}{\alpha }\right)\frac{dz}{|z-w|^{2n}}dw.
\end{align*}
Observe that
\begin{align*} \int_{B(w,t-r)\cap \Omega}\phi\left(\frac{|z-w|}{\alpha(t-r)}\right)\frac{dz}{|z-w|^{2n}}
&\le n\omega_n \int_0^{t-r}  \phi\left(\frac{s}{\alpha(t-r)}\right)\frac{ds}{s^{n+1}}  \le n\omega_n (t-r)^{-n}\alpha^{-n}\int_0^{1/\alpha}  \phi\left(s\right)\frac{ds}{s^{n+1}}.
\end{align*}
Applying \eqref{delta0}, we have
\begin{align*} \int_{B(z,t-r)\cap \Omega}\phi\left(\frac{|z-w|}{\alpha(t-r)}\right)\frac{dz}{|z-w|^{2n}}
&\le n C_\phi\omega_n ( t-r)^{-n}  \phi\left(\frac{ 1  }{\alpha }\right) .
\end{align*}
On the other hand,
\begin{align*} \int_{\left(B(w,2t)\setminus B(w,t-r)\right)\cap \Omega}  \frac{dz}{|z-w|^{2n}}  &\le \int_{\rn \setminus B_ (w,t-r)}  \frac{dz}{|z-w|^{2n}} =  \omega_n (t-r)^{-n}
\end{align*}
Thus
$$H_1(\alpha)\le \omega_n(n C_\phi +1)\frac{|B(x,t)\cap \Omega|}{(t-r)^n}\phi\left(\frac{1 }{\alpha }\right).$$
If $\alpha=M[\phi^{-1}\left(\frac{(t-r)^n}{|B(x,t)\cap \Omega| }\right)]^{-1}$ and $M\ge 2(nC_\phi+1)\omega_n$, we have
$$H_1(\alpha)\le \frac{(n C_\phi+1) \omega_n}{M}\le 1/2 .$$

Write
\begin{align*}H_2(\alpha)&\le \int_{\left(B(x,t) \setminus B(x,r)\right)\cap \Omega}\phi\left(\frac{t-|z-x|}{\alpha(t-r)}\right)\int_{\Omega\setminus B(x,t)}\frac{dw}{|z-w|^{2n}}dz + \int_{B(x,r) \cap \Omega}
\int_{\Omega\setminus B(x,t)}\phi\left(\frac{1}{\alpha }\right)\frac{dw}{|z-w|^{2n}}dz .
\end{align*}
Note that $ \Omega\setminus B(x,t)\subset \Omega\setminus B(z, t-|z-x|)$,
we have
$$\int_{\Omega\setminus B(x,t)}\frac{dw}{|z-w|^{2n}}\le \int_{\rn\setminus B(z, t-|z-x|)}\frac{dw}{|z-w|^{2n}}\le \omega_n  (t-|z-x|)^{-n}$$
Hence,
\begin{align*}H_2(\alpha)&\le \int_{\left(B(x,t) \setminus B(x,r)\right)\cap \Omega}\phi\left(\frac{t-|z-x|}{\alpha(t-r)}\right) \omega_n  (t-|z-x|)^{-n}dz + \int_{B(x,r) \cap \Omega}
 \phi\left(\frac{1}{\alpha }\right) \omega_n  (t-|z-x|)^{-n}dz \\
&\le  2 \omega_n\frac{|B(x,t)\cap \Omega|}{(t-r)^n}\left[
\sup_{s\in(0,1]}\phi\left(\frac{s}{\alpha }\right)\frac1{s^{ n}}  +\phi\left(\frac{1 }{\alpha }\right)\right]\\
\end{align*}
Notice that
$$
\sup_{s\in(2^{-j-1},2^{-j}]}\phi\left(\frac{s}{\alpha }\right)\frac1{s^{ n}}\le  2^n \int_{2^{-j}}^{2^{-j+1}} \phi\left(\frac{s}{\alpha }\right)\frac{ds}{s^{ n+1}}  $$
an hence
$$\sup_{s\in(0,1]}\phi\left(\frac{s}{\alpha }\right)\frac1{s^{ n}}\le 2^n\int_0^2  \phi\left(\frac{s}{\alpha }\right)\frac{ds}{s^{ n+1}}\le  2^n\alpha^{-n}\int_0^{2/\alpha}  \phi\left(s \right)\frac{ds}{s^{ n+1}}\le C_\phi 4^n\phi\left(\frac{1}{\alpha/2 }\right)$$
Therefore,
\begin{align*}H_2(\alpha)
&\le  2(C_\phi4^n+1) \omega_n\frac{|B(x,t)\cap \Omega|}{(t-r)^n} \phi\left(\frac{2 }{\alpha } \right).
\end{align*}
If $\alpha=M[\phi^{-1}\left(\frac{(t-r)^n}{|B(x,t)\cap \Omega| }\right)]^{-1}$ and $M\ge 8  (C_\phi  4^n +1) \omega_n$, we have
$$H_2(\alpha)\le \frac{2 (C_\phi4^n+1)\omega_n}{M/2}\le \frac12. $$
as desired.
\end{proof}

 {\it Step 3.}
Let $x\in\Omega$ and $ 0<r<2\diam\Omega$.
Let $b_0=1$ and $b_j \in (0,1)$ for $j \in N$ such that
\begin{equation}\label{e5.w1}
|B(x, b_jr) \cap \Omega|= 2^{-1}|B(x, b_{j-1}r) \cap \Omega|= 2^{-j}|B(x, r) \cap \Omega|.
\end{equation}
Let  $u_j=u_{x,b_{j+1 }r, b_{j }r} $ for $j\ge 1$ be as in Lemma \ref{l2.3}.
 By \eqref{IM}, we have
$$
\inf_{c\in R}  \int_{B(x, b_{j-1} r) \cap \Omega} \exp\left(  \frac{|u_j-c| }{C (\phi,n,\Omega)\|u\|_{{\dot {\bf B}}^\phi(\Omega)}}\right)\, dy \,\le C(n)  r^{n};
$$
For any       $c \in R $,  we know that $|u_j-c| \geq 1/2$ either on $B(x,\,b_{j+1} r\,) \cap \boz$ or on $[ B(x,b_{j-1} r)\diagdown B(x,\, b_{j }r\,) ]\cap \Omega$, and note that, by \eqref{e5.w1},
$$|B(x,\,b_{j+1} r\,) \cap \boz|= |[ B(x,b_{j-1} r)\diagdown B(x,\, b_{j }r\,) ] \cap \Omega|=2^{-j-1}|B(x,\,  r\,) \cap \boz| .$$
Thus, for any  $j\ge1$,
 we have
$$
 2^{-j-1}|B(x,\,  r\,) \cap \boz| \exp\left(  \frac{|u_j-c| }{C (\phi,n,\Omega)\|u_j\|_{{\dot {\bf B}}^\phi(\Omega)}}\right)
 \le C(n) r^{n}
$$
that is,
$$\frac{|u_j-c| }{C (\phi,n,\Omega)\|u_j\|_{{\dot {\bf B}}^\phi(\Omega)}}\le  \ln \left( 2^{j} \frac{C(n)r^n}{ \left| B(x,\,r\,) \cap \boz\right|} \right)  .$$
Since
\begin{eqnarray*}
\|u_j\|_{{\dot {\bf B}}^\phi(\Omega)} \leq C(\phi,n)\left [\phi^{-1}\left(\frac{(b_{j} r - b_{j+1} r)^n}{|B(x , b_jr) \cap \Omega|}\right)\right]^{-1} = C(\phi,n)\left [\phi^{-1}\left(2^{j}\frac{(b_{j} r - b_{j+1} r)^n}{|B(x ,r) \cap \Omega|}\right)\right]^{-1},
\end{eqnarray*}
we have
$$
\frac1{  C(\phi,n,\Omega)}\phi^{-1}\left(2^{j}\frac{(b_j r - b_{j+1} r)^n}{|B(x , r) \cap \Omega|}\right)\le  \ln \left( 2^{j} \frac{C(n)r^n}{ \left| B(x,\,r\,) \cap \boz\right|} \right),$$
and hence
\begin{align*}
(b_j  - b_{j+1})^n  & \leq 2^{-j }\frac{|B(x, r) \cap  \Omega|} {r^n}
\phi \left[ { C(\phi,n,\Omega)}\ln \left( 2^{j}\frac{2C(n)r^n}{\left| B(x,\, r\,) \cap \boz\right|} \right)\right]
\end{align*}

By \eqref{subexp}, for any $\dz>0$, we have  $\phi(t)\le C(\dz) e^{\dz  t}$ for all $t\ge 0$.
Taking   $\dz_0= 1/2 C(\phi,n,\Omega)$, that is, $C(\phi,n,\Omega)\dz_0=1/2$,  we obtain
\begin{align*}
(b_j  - b_{j+1})^n  & \leq  C(\dz_0)[2C(n)]^{\dz_0 C(\phi,n,\Omega)}\left ( 2^{-j}\frac{|B(x, r) \cap  \Omega| }{r^n}\right)  ^{1-  { \dz_0 C(\phi,n,\Omega) } }\\
&\le   C(\phi,n,\Omega)\left (  \frac{|B(x,  r) \cap  \Omega| }{r^n}\right)  ^{1/2 } 2^{-j/2 }.
\end{align*}
 Thus
   \begin{align*}
b_1  \, = \sum_{j=1}^{\infty} \left(b_{j }  - b_{j+1} \right)
\le    C(\phi,n,\Omega)\left (  \frac{|B(x,  r) \cap  \Omega| }{r^n}\right)  ^{1/2n } \end{align*}

If $b_1 \geq 1/10 $,    we get
\begin{eqnarray*}
\left|B(x,r) \cap \Omega\right| \ge C(\phi,n,\Omega) r^n
\end{eqnarray*}
as desired.

If $b_1 < 1/10 $, we can know that exists a point $x' \in B(x,r) \cap \Omega$ satisfying $|x-x'|= b_1 r+ r/5$. Let $R=2r/5$, then $B(x, b_1 r) \subset B(x',R) \subset B(x,r)$ and $B(x, b_1 r) \cap B(x',R/2) =\emptyset $.
Thus
\begin{eqnarray*}
\left|B(x',R/2) \cap \Omega \right|
&& \leq  \frac{1}{2}\left( \left|\left(B(x,r) \backslash B(x, b_1r)\right) \cap \Omega\right|+\left|B(x',R/2) \cap \Omega \right| \right)\\
&& =\frac{1}{2}\left( \left|B(x,b_1 r) \cap \Omega\right|+\left|B(x',R/2) \cap \Omega \right| \right)\\
&& \leq \frac{1}{2}\left|B(x',R ) \cap \Omega \right|,
\end{eqnarray*}
By this, if
$\left|B(x',  b'_1 R) \cap \Omega \right|=\frac{1}{2}\left|B(x',R ) \cap \Omega \right|$,
then $b_1'\ge 1/2$.
Applying the result when $b_1 \geq 1/10 $ to the $B(x',R/2)$ and $b_1'\ge 1/2$, we get
$$
\left|B(x,r) \cap \Omega\right| \geq \left|B(x',R)\cap \Omega\right| \geq C(\phi,n,\Omega)  R^{n}\ge C(\phi,n,\Omega) r^n ,
$$  as desired. This completes the proof of Theorem 1.1 (ii).

\medskip



\end{document}